\theoremstyle{plain}
\newtheorem*{theorem*}{Theorem}
\newtheorem{theorem}{Theorem}[section]
\newtheorem*{lemma*}{Lemma}
\newtheorem{lemma}[theorem]{Lemma}
\newtheorem*{proposition*}{Proposition}
\newtheorem{proposition}[theorem]{Proposition}
\newtheorem*{corollary*}{Corollary}
\newtheorem{corollary}[theorem]{Corollary}
\newtheorem*{claim*}{Claim}
\theoremstyle{definition}
\newtheorem*{assumption*}{Assumption}
\newtheorem*{definition*}{Definition}
\newtheorem{definition}[theorem]{Definition}
\newtheorem*{convention*}{Convention}
\newtheorem*{example*}{Example}
\newtheorem*{algorithm*}{Algorithm}
\newtheorem*{remark*}{Remark}
\numberwithin{equation}{section}
\def\al{\alpha}
\def\be{\beta}
\def\de{\delta}
\def\la{\lambda}
\def\si{\sigma}
\def\vh{\varphi}
\def\ps{\psi}
\def\De{\Delta}
\def\C{\mathbb{C}}
\def\N{\mathbb{N}}
\def\R{\mathbb{R}}
\def\p{\partial}
\def\<{\langle}
\def\>{\rangle}
\renewcommand{\o}{\circ}
\newcommand{\ti}{\tilde}
\let\on=\operatorname
\newcommand{\sr}[1]%
{\ifmmode{}^\dagger\else${}^\dagger$\fi\ifvmode
\vbox to 0pt{\vss
 \hbox to 0pt{\hskip\hsize\hskip1em
 \vbox{\hsize3cm\raggedright\pretolerance10000
 \noindent #1\hfill}\hss}\vss}\else
 \vadjust{\vbox to0pt{\vss%
 \hbox to 0pt{\hskip\hsize\hskip1em%
 \vbox{\hsize3cm\raggedright\pretolerance10000%
 \noindent #1\hfill}\hss}\vss}}\fi%
}
\newcommand{\tC}[2]{\tensor[^{#1}]{C}{^{#2}}}
\title[A new proof of Bronshtein's theorem]
{A new proof of Bronshtein's theorem}
\author[Adam Parusi\'nski and  Armin Rainer]
{Adam Parusi\'nski and Armin Rainer}
\address {Adam Parusi\'nski: Univ. Nice Sophia Antipolis, CNRS,  LJAD, UMR 7351, 06108 Nice, France}
\email{adam.parusinski@unice.fr}
\address{Armin Rainer: Fakult\"at f\"ur Mathematik, Universit\"at Wien, 
Oskar-Morgenstern-Platz~1, A-1090 Wien, Austria}
\email{armin.rainer@univie.ac.at}
\begin{document}

\begin{abstract}
  We give a new self-contained proof of Bronshtein's theorem, that  
  any continuous root of a $C^{n-1,1}$-family of monic 
  hyperbolic polynomials of degree $n$ is locally Lipschitz, and 
  obtain explicit bounds for the Lipschitz constant of the root in 
  terms of the coefficients.
  As a by-product we reprove the recent result of Colombini, Orr\'u, 
  and Pernazza, that a $C^n$-curve of hyperbolic polynomials of 
  degree $n$ admits a $C^1$-system of its roots.
\end{abstract}

\thanks{Supported by the Austrian Science Fund (FWF), Grants P~22218-N13 and P~26735-N25, and by ANR project STAAVF (ANR-2011 BS01 009).}
\keywords{Bronshtein's theorem, Lipschitz and $C^1$ roots of hyperbolic polynomials}
\subjclass[2010]{26C05, 26C10, 30C15, 26A16}
\date{May 7, 2014}

\maketitle

\section{Introduction}

Choosing regular roots of polynomials whose coefficients depend on parameters is a classical much studied problem 
with important connections to various fields such as algebraic geometry, partial differential equations, and 
perturbation theory. 

This problem is of special interest for \emph{hyperbolic} polynomials whose roots are all real. 
Probably the first result in this direction was obtained by Glaeser \cite{Glaeser63R} who studied the square root of a 
nonnegative smooth function. The most important and most difficult result in this field is Bronshtein's theorem 
\cite{Bronshtein79}: any continuous root of a $C^{p-1,1}$-curve of monic 
hyperbolic polynomials, where $p$ is the maximal multiplicity of the roots, 
is locally Lipschitz with uniform Lipschitz constants; cf.~Theorem~\ref{main}. 
A multiparameter version follows immediately; see Theorem~\ref{main2}. 
A different proof was later given by Wakabayashi \cite{Wakabayashi86} who actually proved a more general H\"older 
version; for a refinement of Bronshtein's method in order to show this generalization see Tarama~\cite{Tarama06}. 
Kurdyka and Paunescu \cite{KurdykaPaunescu08} used resolution of singularities to show that the roots of a hyperbolic 
polynomial whose coefficients are real analytic functions in several variables admit a parameterization which is 
locally Lipschitz; in one variable we have Rellich's classical theorem \cite{Rellich37I} that the roots may be 
parameterized by real analytic functions.    

A $C^p$-curve of monic hyperbolic polynomials with at most $p$-fold roots admits a differentiable system of 
its roots. Using Bronshtein's theorem, Mandai \cite{Mandai85} showed that the roots can be chosen $C^1$ 
if the coefficients are $C^{2p}$, and Kriegl, Losik, and Michor \cite{KLM04} found twice differentiable roots 
provided that the coefficients are $C^{3p}$. 
Recently, Colombini, Orr\'u and Pernazza \cite{ColombiniOrruPernazza12} 
proved that $C^p$ (resp.\ $C^{2p}$) coefficients suffice for $C^1$ (resp.\ twice differentiable) roots
and that this statement is best possible.

In this paper we present a new proof of Bronshtein's theorem. 
Our proof is simple and elementary.  The main tool is the splitting principle, a criterion that allows to factorize 
polynomials under elementary assumptions.  The coefficients of the factors can be expressed in a simple way 
in terms of the coefficients of the original polynomials, so that the bounds on the coefficients and their derivatives 
can be also carried over.   Thanks to this we obtain explicit bounds on the Lipschitz constant of the roots.  
As a by-product we give a new proof of the aforementioned result of 
Colombini, Orr\'u, and Pernazza on the existence of $C^1$-roots; see Theorem \ref{mainC1}. 

Note that the statements of Theorem~\ref{main}, Theorem~\ref{main2}, and Theorem~\ref{mainC1} are best possible in the 
following sense. 
If the coefficients are just $C^{p-1,1}$ then the roots need not admit a differentiable 
parameterization. Moreover, the roots can in general not be parameterized by $C^{1,\al}$-functions for any $\al>0$ 
even if the coefficients are $C^\infty$. 
Some better conclusions can be obtained if additional assumptions are made; see \cite{AKLM98}, \cite{BBCP06}, 
\cite{BonyColombiniPernazza06}, \cite{BonyColombiniPernazza10}, \cite{RainerOmin}, \cite{RainerFin}.

\begin{convention*}
  We will denote by $C(n,\ldots)$ \emph{any} constant depending only on $n, \ldots$;
  it may change from line to line. 
  Specific constants will bear a subscript like $C_1(n)$ or $C_2(n)$.  
\end{convention*}


\section{Bronshtein's theorem}

Let $I \subseteq \R$ be an open interval and consider a monic polynomial 
\begin{equation*}
  P_a(t)(Z) = P_{a(t)}(Z) = Z^n + \sum_{j=1}^n a_j(t) Z^{n-j}, \quad t \in I.
\end{equation*}
We say that $P_a(t)$, $t\in I$, 
is a \emph{$C^{p-1,1}$-curve of hyperbolic polynomials} if $(a_j)_{j=1}^n \in C^{p-1,1}(I,\R^n)$ and 
all roots of $P_a(t)$ are real 
for each $t \in I$.  

Note that ordering the roots of a hyperbolic polynomial $P_a(Z)=Z^n + \sum_{j=1}^n a_j Z^{n-j}$
increasingly, $\la^{\uparrow}_1(a) \le \la^{\uparrow}_2(a) \le \cdots \le \la^{\uparrow}_n(a)$, 
provides a continuous mapping $\la^{\uparrow} = (\la^{\uparrow}_j)_{j=1}^n : H_n \to \R^n$ 
on the space of 
hyperbolic polynomials of degree $n$, see e.g.~\cite[Lemma 4.1]{AKLM98}, 
which can be identified with a closed semialgebraic subset 
$H_n \subseteq \R^n$, see e.g.\ \cite{Procesi78}.

By a \emph{system of the roots} of $P_a(t)$, $t \in I$, we mean any $n$-tuple $\la = (\la_j)_{j=1}^n : I \to \R^n$
satisfying
\[
    P_a(t)(Z) = \prod_{j=1}^n (Z - \la_j(t)), \quad t \in I.   
\] 
Note that any \emph{continuous root} $\mu_1$ of $P_a(t)$, $t \in I$, i.e., $\mu_1 \in C^0(I,\R)$ and 
$P_a(t)(\mu_1(t)) = 0$ for all $t \in I$, can be completed to a continuous system of the roots $\mu=(\mu_j)_{j=1}^n$,
cf.\ \cite[Lemma 6.17]{RainerN}.

\begin{theorem}[Bronshtein's theorem] \label{main}
  Let $P_a(t)$, $t \in I$, be a $C^{p-1,1}$-curve of hyperbolic polynomials of degree $n$, where 
  $p$ is the maximal multiplicity of the roots of $P_a$.  
  Then any continuous root of $P_a$ is locally Lipschitz. 
  
  Moreover if $p=n$ then  for any pair of intervals $I_0 \Subset I_1\Subset I$ and for any continuous root $\la (t)$ its 
  Lipschitz constant can be bounded as follows 
  \begin{align}\label{theorem1bounds}
  \on{Lip}_{I_{0}}(\la)  
		 &  \le C(n,I_0,I_1)\, \big(\max_i \|a_i\|^{\frac 1 i}_{C^{n-1,1}(\overline I_1)}\big) 
		  \\
\notag	
		 &  \le \tilde C (n,I_0,I_1)\, \big(1+\max_i \|a_i\|_{C^{n-1,1}(\overline I_1)}\big) ,
\end{align}
where the constants $C(n,I_0,I_1)$, $\tilde C (n,I_0,I_1)$ depend only on $n$ and the intervals $I_0,I_1$.  
(More precise bounds are stated in Subsection \ref{subsectionbounds1}.)

If $p<n$ then  there exist  uniform bounds on the Lipschitz constant provided 
 the multiplicities of roots are at most $p$ ``in a uniform way''.   These bounds are stated in Subsection~\ref{subsectionbounds2}.
 \end{theorem}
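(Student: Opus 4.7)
The plan is to proceed by strong induction on the degree $n$. The base case $n=1$ is immediate since the unique root is $-a_1(t)$, which is $C^{p-1,1}=C^{0,1}$ and hence Lipschitz. For the inductive step, I fix an arbitrary $t_0\in I$ and establish a local Lipschitz bound at $t_0$; a finite covering of the compact subinterval $\overline{I_0}$ by such neighborhoods, together with uniformity of the bounds, then yields the global statement.

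At the fixed $t_0$ I distinguish two cases according to the multiplicity pattern of the roots of $P_{a(t_0)}$. In the first case, not all roots coincide at $t_0$: then $P_{a(t_0)}=Q_0\,R_0$ with $Q_0,R_0$ monic, coprime, and of degree strictly less than $n$. The splitting principle (the main tool announced in the introduction) should extend this factorization to a small neighborhood $J\ni t_0$ as $P_{a(t)}=Q(t)\,R(t)$, with the coefficients of $Q$ and $R$ lying in $C^{p-1,1}(J)$ and their norms explicitly controlled by those of $a$. Both factors remain families of hyperbolic polynomials whose maximal root multiplicities are at most $p$. Applying the inductive hypothesis to $Q$ and $R$, and tracing through the coefficient bounds from the splitting step, yields the Lipschitz property and explicit estimates for continuous roots of $P_a$ near $t_0$.

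In the second case, all roots of $P_{a(t_0)}$ coincide at some common value $c=-a_1(t_0)/n$. After the $C^{p-1,1}$ Tschirnhaus shift $Z\mapsto Z-a_1(t)/n$, which preserves hyperbolicity and regularity, I may assume all roots at $t_0$ vanish and $a_j(t_0)=0$ for every $j$; in this situation necessarily $p=n$, so the coefficients are $C^{n-1,1}$. The target is the quantitative bound
\[
  |a_j(t)|\le C_j\,|t-t_0|^j,\qquad j=1,\ldots,n,
\]
valid for $t$ in a neighborhood of $t_0$, with $C_j$ controlled by $\max_i\|a_i\|_{C^{n-1,1}}$. Combined with the classical Cauchy-type estimate $\max_j|\la_j(b)|\le C(n)\max_j|b_j|^{1/j}$ for roots of any monic polynomial with coefficients $b_j$, this immediately gives $|\la(t)-\la(t_0)|\le C|t-t_0|$ and the explicit bound \eqref{theorem1bounds}.

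The hardest step will be the coefficient estimate in the second case, where hyperbolicity has to be exploited. My plan is induction on $j$: after the Tschirnhaus reduction we have $a_1\equiv 0$, so the identity $\sum_k \la_k(t)^2=-2a_2(t)\ge 0$ exhibits $-2a_2$ as a nonnegative $C^{1,1}$ function vanishing at $t_0$, and Glaeser's classical lemma gives a Lipschitz bound on $\sqrt{-2a_2}$, i.e.\ $|a_2(t)|\le C(t-t_0)^2$. For higher $j$ I would iterate with Glaeser-type inequalities applied to the power sums $\sum_k\la_k(t)^j$, which Newton's identities express in terms of $a_1,\ldots,a_j$; the $C^{n-1,1}$ hypothesis provides exactly the regularity needed to iterate $n-1$ times. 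Bookkeeping how the constants depend on $\|a_i\|_{C^{n-1,1}}$ in order to recover the precise form of \eqref{theorem1bounds} is the main technical task.
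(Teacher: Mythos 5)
Your outline shares the paper's broad skeleton (induction on degree, Tschirnhausen reduction, split where roots separate, exploit hyperbolicity where they coincide), but there are three concrete gaps that prevent the argument from closing.

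\textbf{Uniformity in the splitting case.} When not all roots coincide at $t_0$, the inverse function theorem does give a splitting $P_a = Q\,R$ on some neighborhood $J\ni t_0$, but the size of $J$ and the $C^{p-1,1}$-norms of the factors depend on the resultant of $Q_0$ and $R_0$, which degenerates as $t_0$ approaches the locus where all roots collide. Your proposal ``traces through the coefficient bounds'' without confronting this degeneration; a naive induction therefore produces Lipschitz constants that blow up, and the promised ``uniformity of the bounds'' is not established. The paper resolves this with a rescaling: it passes from $P_{\tilde a}$ to $Q_{\underline a}(Z) := |\tilde a_2|^{-n/2} P_{\tilde a}(|\tilde a_2|^{1/2} Z)$, whose coefficients lie in the compact slice $H_n^0 = H_n \cap \{\tilde a_2 = -1\}$. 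The splitting then holds on a ball of \emph{fixed} radius in $\underline a$-space, and the whole induction is run through the weighted estimates (A.1)--(A.4) so that the constant $A$ propagates from step to step without degenerating. Some device of this type is indispensable.

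\textbf{The pointwise estimate at the coincidence point is not a Lipschitz bound.} Your Case 2 gives $|\lambda(t) - \lambda(t_0)| \le C|t-t_0|$ with one endpoint anchored at the special point $t_0$. This does not control $|\lambda(t)-\lambda(s)|$ for two nearby points $t,s$ on the same side of $t_0$, so it is not a Lipschitz bound on a neighborhood. It can only be used in conjunction with a genuine neighborhood Lipschitz bound at all points where the roots separate, together with a patching argument (compare the paper's Lemma on extending Lipschitz bounds across the zero set of $\tilde a_2$); but that brings you back to the uniformity issue above.

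\textbf{Iterated Glaeser on power sums does not work as stated.} You propose applying Glaeser-type inequalities to the power sums $s_j = \sum_k \lambda_k^j$. But Glaeser's lemma requires a sign-definite function, and $s_j$ is sign-definite only for even $j$; for odd $j$ the argument breaks. The paper sidesteps this entirely: the hyperbolicity input is the elementary inequality $|\tilde a_i|^{1/i} \le \sqrt 2\, |\tilde a_2|^{1/2}$ (Lemma~\ref{lem:hyp}, proved via Newton's identities and the bound $|s_i|^{1/i} \le |s_2|^{1/2}$), after which Glaeser needs to be invoked only for the single nonpositive function $\tilde a_2$, and Taylor's theorem (Lemma~\ref{taylor}) supplies the remaining derivative bounds. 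You would need to replace your iterated Glaeser scheme with something of this sort.

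Your final Cauchy-type root estimate and the dependence of the constant on $\max_i\|a_i\|^{1/i}_{C^{n-1,1}}$ are consistent with the paper's target bound, but the route to it must be reworked along the lines above.
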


For an open subset $U \subseteq \R^m$ and $p \in \N_{\ge 1}$, we denote by $C^{p-1,1}(U)$ 
the space of all functions $f \in C^{p-1}(U)$ so that each partial derivative $\p^\al f$ of order
$|\al|=p-1$ is locally Lipschitz. It is a Fr\'echet space with the following system of seminorms,
\[
  \|f\|_{C^{p-1,1}(K)} = \|f\|_{C^{p-1}(K)} + \sup_{|\al|=p-1}\on{Lip}_K(\p^\al f), 
  \quad \on{Lip}_K(f) = \sup_{\substack{x,y \in K\\ x \ne y}} \frac{|f(x)-f(y)|}{\|x-y\|},
\]
where $K$ ranges over (a countable exhaustion of) the compact subsets of $U$; 
on $\R^m$ we consider the $2$-norm $\|~\|=\|~\|_2$. 

By Rademacher's theorem, the partial derivatives of order $p$ of a function $f \in C^{p-1,1}(U)$ exist almost everywhere 
and coincide almost everywhere with the corresponding weak partial derivatives.

Theorem~\ref{main} readily implies the following multiparameter version.

\begin{theorem} \label{main2}
  Let $U\subseteq \R^m$ be open and 
  let $P_a(x)$, $x \in U$, be a $C^{p-1,1}$-family of hyperbolic polynomials of degree $n$, 
  where $p$ is the maximal multiplicity of the roots of $P_a$. 
  Then any continuous  root of $P_a$ is locally Lipschitz.

  Moreover, if $p=n$ for any pair of relatively compact subsets $U_0 \Subset U_1\Subset U$ and for any continuous root $\la (x)$ its 
  Lipschitz constant can be bounded as follows 
  \begin{align}\label{theorem2bounds}
  \on{Lip}_{U_0}(\la)  
     &  \le C(m,n,U_0,U_1)\, \big(\max_i \|a_i\|^{\frac 1 i}_{C^{n-1,1}(\overline U_1)}\big) 
      \\
  \notag  
     &  \le \tilde C (m,n,U_0,U_1)\, \big(1+\max_i \|a_i\|_{C^{n-1,1}(\overline U_1)}\big) ,
  \end{align}
  where the constants $C(m,n,U_0,U_1)$, $\tilde C (m,n,U_0,U_1)$ depend only on $m$, $n$, and the sets $U_0,U_1$.
\end{theorem}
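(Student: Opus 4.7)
The plan is to reduce the multiparameter Theorem~\ref{main2} to the one-parameter Theorem~\ref{main} by restricting the coefficient family along straight line segments. Given $U_0 \Subset U_1 \Subset U$ and a continuous root $\la \colon U \to \R$ of $P_a$, I first fix $\delta := \tfrac{1}{2} \on{dist}(\overline{U_0}, \R^m \setminus U_1) > 0$, which ensures that for any $x \in \overline{U_0}$ and any $y \in \R^m$ with $\|y - x\| \le \delta$, the closed segment $[x,y]$ together with a small open extension at either endpoint lies entirely in $U_1$.

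For $x, y \in U_0$ with $0 < L := \|y - x\| \le \delta$, set $v := (y - x)/L$ and define $\gamma(s) := x + s\,v$ on an open interval $I \supset [0, L]$ chosen so that $\gamma(\overline{I}) \subset U_1$. Then $b_j(s) := a_j(\gamma(s))$ yields a $C^{p-1,1}$-curve of monic hyperbolic polynomials on $I$, and $\mu(s) := \la(\gamma(s))$ is a continuous root of $P_b$. Crucially, the multiplicity of $\mu(s)$ as a root of $P_{b(s)}$ coincides with the multiplicity of $\la(\gamma(s))$ as a root of $P_{a(\gamma(s))}$, so the maximal multiplicity of roots of $P_b$ is at most $p$ along $I$. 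Applying Theorem~\ref{main} on $[0, L] \Subset I_1 \Subset I$ yields
\[
    |\la(y) - \la(x)| = |\mu(L) - \mu(0)| \le L \cdot \on{Lip}_{[0,L]}(\mu).
\]
Since $\gamma$ is affine and parameterized by arc length, the chain rule gives $\|b_j\|_{C^{n-1,1}(\overline{I_1})} \le C(m,n)\,\|a_j\|_{C^{n-1,1}(\overline{U_1})}$ with no $L$-factor, and plugging this into \eqref{theorem1bounds} produces the bound \eqref{theorem2bounds} locally, with constants independent of the choice of $x,y$.

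To handle pairs $x, y \in U_0$ farther apart than $\delta$, I cover $\overline{U_0}$ by finitely many open balls of radius $\delta/4$ (possible by compactness of $\overline{U_0}$) and connect $x$ to $y$ by a chain of at most $N(U_0, U_1)$ sub-segments of length $\le \delta$, each contained in $U_1$; summing the local estimates along the chain gives the global Lipschitz bound with a constant depending only on $m, n, U_0, U_1$. The case $p < n$ proceeds identically, invoking the bounds of Subsection~\ref{subsectionbounds2} in place of \eqref{theorem1bounds}. I do not expect any substantive obstacle beyond the one-parameter theorem itself: the only thing to verify carefully is the chain-rule estimate on $\|b_j\|_{C^{n-1,1}}$, which is routine because $\gamma$ is an affine isometry onto its image, so successive derivatives of $b_j$ pick up only combinatorial constants from the multinomial expansion involving the unit vector $v$. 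This is precisely why the authors describe the multiparameter version as an immediate consequence.
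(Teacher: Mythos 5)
Your proof is correct, and the core idea — restricting the coefficient family to one-dimensional slices and invoking Theorem~\ref{main} — is the same as the paper's. The geometric implementation differs slightly: the paper first reduces (without loss of generality) to the case where $U_0$ and $U_1$ are coordinate-aligned open boxes, then restricts the coefficient functions along the coordinate axes $t \mapsto a_j(z + t e_k)$ and bounds $|\la(x)-\la(y)|$ by a telescoping sum over the coordinates of $h = y-x$, yielding $|\la(x)-\la(y)| \le C\|h\|_1 \le C\sqrt{m}\|h\|_2$. You instead restrict along the segment from $x$ to $y$ in an arbitrary direction, parameterized by arc length, and then glue local estimates via a chain/covering argument for points farther apart than $\delta$. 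Both work. The paper's choice buys a small simplification on the norm-transfer step: for coordinate-axis restrictions one has $\partial_s^k b_j = \partial_{x_k}^k a_j$, so $\|b_j\|_{C^{n-1,1}}$ is bounded by $\|a_j\|_{C^{n-1,1}}$ with constant $1$ and no multinomial bookkeeping, whereas your oblique restriction produces the (harmless, as you note) factor $C(m,n)$. In exchange, your chain argument makes explicit the reduction to the local case that the paper compresses into its ``without loss of generality'' step. One small point worth making explicit: when applying \eqref{theorem1bounds} to the restricted curve you do not need the restriction to actually attain multiplicity $n$ anywhere; the $p=n$ bound in Theorem~\ref{main} holds for any $C^{n-1,1}$ hyperbolic curve of degree $n$, since $p\le n$ always, so the restriction's maximal multiplicity being possibly smaller than $p$ causes no difficulty.
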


\begin{proof}
  Let $\la$ be a continuous root of $P_a$.  
  Without loss of generality we may assume that $U_0$ and $U_1$ are open boxes 
  parallel to the coordinate axes, $U_i = \prod_{j=1}^m I_{i,j}$, $i=0,1$, with $I_{0,j} \Subset I_{1,j}$ for all $j$.
  Let $x,y \in U_0$ and set $h:= y-x$.
  Let $\{e_j\}_{j=1}^m$ denote the standard unit vectors in $\R^m$.
  For any $z$ in the orthogonal projection of $U_0$ on the hyperplane $x_j =0$ 
  consider the function $\la_{z,j} : I_{0,j} \to \R$ defined by 
  $\la_{z,j}(t) := \la(z + t e_j)$. 
  By Theorem~\ref{main}, each $\la_{z,j}$ is Lipschitz and $C := \sup_{z,j} \on{Lip}_{I_{0,j}}(\la_{z,j}) < \infty$.
  Thus 
  \[
    |\la(x)-\la(y)| 
    \le \sum_{j=0}^{m-1} \Big|\la\big(x+\sum_{k=1}^j h_k e_k\big) - \la\big(x+\sum_{k=1}^{j+1} h_k e_k\big)\Big| 
    \le C \|h\|_1 \le C \sqrt{m} \|h\|_2.
  \] 
  The bounds \eqref{theorem2bounds} follow from \eqref{theorem1bounds}.
\end{proof}

\begin{corollary}
  Let $U\subseteq \R^m$ be open.
	The push forward $(\la^{\uparrow})_* : C^{n-1,1}(U,\R^n) \supseteq C^{n-1,1}(U,H_n) \to C^{0,1}(U,\R^n)$ 
  is bounded.
\end{corollary}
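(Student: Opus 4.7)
The plan is to combine the explicit Lipschitz estimate of Theorem~\ref{main2} with the continuity of the ordering map $\la^\uparrow : H_n \to \R^n$ (recalled just before Theorem~\ref{main}) to establish both well-definedness and continuity of the push-forward $(\la^\uparrow)_*$.

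\textbf{Well-definedness.} For $a \in C^{n-1,1}(U,H_n)$ the composition $\la^\uparrow \circ a : U \to \R^n$ is continuous since $\la^\uparrow$ is. Each coordinate $\la^\uparrow_j \circ a$ is therefore a continuous root of $P_a$, so Theorem~\ref{main2} gives that it is locally Lipschitz with seminorm controlled through \eqref{theorem2bounds} on any pair $U_0 \Subset U_1 \Subset U$. Hence $(\la^\uparrow)_* a \in C^{0,1}(U,\R^n)$.

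\textbf{Continuity.} Let $a^{(k)} \to a$ in $C^{n-1,1}(U,\R^n)$ with $a^{(k)}, a \in C^{n-1,1}(U,H_n)$, and fix $U_0 \Subset U_1 \Subset U$. The convergence in $C^{n-1,1}$ entails uniform convergence $a^{(k)} \to a$ on $\overline{U_1}$ together with a uniform bound on $\|a^{(k)}\|_{C^{n-1,1}(\overline{U_1})}$. Since the images $a^{(k)}(\overline{U_1})$ eventually lie in a common compact subset of $H_n$, uniform continuity of $\la^\uparrow$ there yields
\[
  \sup_{x \in \overline{U_1}}\|\la^\uparrow(a^{(k)}(x)) - \la^\uparrow(a(x))\| \longrightarrow 0,
\]
while the coordinatewise application of \eqref{theorem2bounds} produces the uniform bound
\[
  \on{Lip}_{U_0}(\la^\uparrow_j \circ a^{(k)}) \le \tilde C(m,n,U_0,U_1)\,\big(1 + \sup_k \|a^{(k)}\|_{C^{n-1,1}(\overline{U_1})}\big) < \infty.
\]
The combination of uniform convergence with uniform Lipschitz bounds on every relatively compact subset is exactly the convergence statement in the natural topology of compact convergence in $C^{0,1}(U,\R^n)$.

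\textbf{Main obstacle.} The soft part is uniform convergence, which follows directly from continuity of $\la^\uparrow$; the substantive content is the uniform Lipschitz bound for the \emph{entire} sequence. It is precisely the explicit quantitative form of Theorem~\ref{main2}, giving the Lipschitz constant as an explicit function of the $C^{n-1,1}$-norm of the coefficients, that produces this uniform control and thereby closes the argument. A proof based only on the qualitative assertion that continuous roots are locally Lipschitz would not suffice, since one would then have to upgrade subsequential equicontinuity to a bound for the whole sequence by a separate argument.
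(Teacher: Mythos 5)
Your plan correctly extracts the two things that Theorem~\ref{main2} provides -- well-definedness via local Lipschitzness and, through \eqref{theorem2bounds}, a bound on $\on{Lip}_{U_0}$ in terms of $\|a\|_{C^{n-1,1}(\overline{U_1})}$ -- and it correctly separates the ``soft'' uniform-convergence part from the quantitative part. That matches what the paper has in mind: no separate proof is given, and the corollary is presented as an immediate consequence of the explicit estimate of Theorem~\ref{main2}.

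There is, however, a genuine gap in the last step of your continuity argument, and you should not paper over it. With the seminorms $\|f\|_{C^{0,1}(K)} = \|f\|_{C^0(K)} + \on{Lip}_K(f)$ as defined in the paper, convergence of $\la^\uparrow\circ a^{(k)}$ to $\la^\uparrow\circ a$ in $C^{0,1}(U,\R^n)$ means $\on{Lip}_K(\la^\uparrow_j\circ a^{(k)} - \la^\uparrow_j\circ a)\to 0$ for every compact $K\Subset U$, and this is strictly stronger than ``uniform convergence together with a uniform bound on the individual Lipschitz seminorms.'' Your sentence claiming that the latter combination ``is exactly the convergence statement in the natural topology'' is therefore incorrect in the Fr\'echet topology the paper defines. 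Worse, for the ordered-roots map the strong statement actually fails: take $n=2$, $U=\R$, $a^{(k)}(t)=(0,-(t-1/k)^2)\to a(t)=(0,-t^2)$ (convergence even in $C^\infty$), so that $\la^\uparrow_1\circ a^{(k)}=-|t-1/k|$ and $\la^\uparrow_1\circ a=-|t|$; the difference $|t|-|t-1/k|$ is piecewise linear with slope $\pm2$ on $[0,1/k]$, hence has Lipschitz seminorm $2$ on any neighbourhood of $0$ for every $k$. So what your argument (and the paper's estimate) really yields is that $(\la^\uparrow)_*$ is \emph{bounded}, i.e.\ maps $C^{n-1,1}$-bounded sets of hyperbolic coefficients to $C^{0,1}$-bounded sets, and is continuous into $C^{0,1}$ equipped with the weaker ``mixed'' topology of compact $C^0$-convergence plus locally uniform Lipschitz bounds. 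You should state explicitly which topology on the target you are using, since in the full Fr\'echet $C^{0,1}$-topology the claim as you closed it is simply false.
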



Next we suppose that $P_a(t)$, $t \in I$, is a $C^{p}$-curve of hyperbolic polynomials of degree $n$, where 
$p$ is the maximal multiplicity of the roots of $P_a$.  
Then the roots can be chosen $C^1$. 
We will give a new proof of this recent result of \cite{ColombiniOrruPernazza12},  see Theorem~\ref{mainC1}.
    
For a function $f(t)$ we denote by $f'^-(t_0)$ (resp. $f'^+(t_0)$) the left (resp.\ right) derivative of $f$ 
at the point $t_0$.

\begin{theorem} \label{mainC1}
  Let $P_a(t)$, $t \in I$, be a $C^{p}$-curve of hyperbolic polynomials of degree $n$, where 
  $p$ is the maximal multiplicity of the roots of $P_a$.  
  Then: 
  \begin{enumerate}
  \item
  Any continuous root $\la(t)$ of $P_a$ has both one-sided 
  derivatives at every $t\in I$.  
  \item
  These derivatives are continuous: for every $t_0\in I$  we have 
  $$
  \lim _{t\to t_0^-} \lambda'^\pm (t) =   \lambda'^- (t_0)   \quad  \lim _{t\to t_0^+} \lambda'^\pm (t) =   \lambda'^+ (t_0) .
 $$
 \item
There exists a differentiable system of the roots.  
\item 
Any differentiable  root is $C^1$.  
 \end{enumerate}
\end{theorem}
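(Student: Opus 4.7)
The plan is to localize around a point $t_0 \in I$ using the splitting principle and run a rescaling analysis. Fix a continuous root $\lambda$ of $P_a$ and, after translation, assume $t_0 = 0$ and $\lambda(0) = 0$; let $q \le p$ be the multiplicity of $0$ as a root of $P_{a(0)}$. The splitting principle yields a factorization $P_a(t) = Q(t,\cdot)\,R(t,\cdot)$ on a neighborhood of $0$, with $Q$ a monic $C^p$ polynomial of degree $q$ satisfying $Q(0,Z) = Z^q$ and $R(0,0) \ne 0$, so we may replace $P_a$ by $Q(t,Z) = Z^q + \sum_{j=1}^q b_j(t) Z^{q-j}$ with $b_j \in C^p$ and $b_j(0) = 0$.

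For (1), Bronshtein's theorem gives that every continuous root of $Q$ is Lipschitz near $0$; writing $b_j = (-1)^j e_j(\lambda_1,\ldots,\lambda_q)$ in terms of any continuous system of roots yields $|b_j(t)| \le C_j |t|^j$. Combined with $b_j \in C^j$, Taylor expansion at $0$ forces $b_j^{(k)}(0) = 0$ for $k < j$, hence
\[
  b_j(t) = c_j\, t^j + o(t^j), \qquad c_j := b_j^{(j)}(0)/j!.
\]
Substituting $Z = tW$ and dividing by $t^q$ yields $t^{-q} Q(t, tW) \to Q_*(W) := W^q + \sum_{j=1}^q c_j W^{q-j}$ as $t \to 0$, and $Q_*$ is hyperbolic as a pointwise limit of hyperbolic polynomials. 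The continuous function $t \mapsto \lambda(t)/t$ takes values in the roots of these converging polynomials, so its set of cluster points at $0^+$ is a connected subset of the finite real root set of $Q_*$, hence a singleton, yielding $\lambda'^+(0)$. The same argument gives $\lambda'^-(0)$.

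For (2) and (4), the same rescaling performed at each nearby point $t_1$ (with a local splitting about $(t_1,\lambda(t_1))$) identifies $\lambda'^\pm(t_1)$ as a root of a polynomial whose coefficients depend continuously on $t_1$ via the $C^p$ dependence of the Taylor coefficients of the $b_j$; running the cluster-set argument uniformly in $t_1$ then yields the continuity statement (2). Item (4) is immediate: if $\lambda$ is differentiable then $\lambda'(t_0) = \lambda'^-(t_0) = \lambda'^+(t_0)$ at every $t_0$, and (2) supplies the continuity of $\lambda'$.

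For (3), at each $t_0$ the multiset of one-sided derivatives $\{\mu_j'^\pm(t_0)\}$ of any continuous system $(\mu_j)$ equals, independently of the sign, the multiset of roots of the derived polynomial in each local splitting factor, so the left and right multisets agree and can be paired. The plan is to construct a differentiable system by induction on the degree $n$ (equivalently on the depth of the splitting tree): within each factor of the splitting at $t_0$, start with any continuous system and perform a root swap at each point where $\mu_j'^-$ and $\mu_j'^+$ differ, replacing $\mu_j$ on one side of $t_0$ by another continuous root $\mu_k$ with $\mu_k(t_0) = \mu_j(t_0)$ that carries the missing one-sided derivative. I expect the main obstacle to lie precisely here: the points requiring a swap are not a priori isolated, so the pairing must be built globally and consistently by exploiting (1) and (2) inside the splitting factors in order to verify that the pointwise-swapped selection really assembles into a well-defined continuous -- and hence, by the pairing, differentiable -- system on all of $I$.
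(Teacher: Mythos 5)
Your proof of (1) is essentially correct and close in spirit to the paper's argument: both rescale $Z = tW$ near a degenerate point, pass to a limiting hyperbolic polynomial, and exploit that the cluster set of $\lambda(t)/t$ as $t\to 0^+$ is connected (being the cluster set of a continuous bounded function on an interval) and contained in a finite root set, hence a singleton. The paper obtains the coefficient decay $\ti a_i(t) = O(|t|^i)$ from the hyperbolicity estimate (Lemma~\ref{lem:hyp}) rather than from Bronshtein's theorem plus the Lipschitz property, splits the rescaled polynomial via the $\tC{p}{m}$-machinery (Proposition~\ref{prop:pCm}), and phrases the singleton-cluster conclusion through a choice of splitting factor; these are presentational differences.

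Your treatment of (2), however, has a genuine gap. The assertion that $\lambda'^\pm(t_1)$ is ``a root of a polynomial whose coefficients depend continuously on $t_1$'' is not literally true: the local factor around $(t_1,\lambda(t_1))$, and in particular its degree $q(t_1)$, jumps discontinuously as $t_1$ varies, and ``running the cluster-set argument uniformly in $t_1$'' provides no quantitative rate of convergence. The paper proves (2) by a \emph{quantitative} refinement of the Lipschitz estimate: after peeling off the $C^1$ linear part $-b_1/\deg P_b$, the residual root $\mu$ satisfies the Assumption of Section~4 with a constant $A$ that can be made arbitrarily small on sufficiently small neighborhoods of the degenerate point --- this is Lemma~\ref{flatcaselemma} in Case~(i) and the unnumbered lemma in the Case~(ii) discussion (which itself requires estimating $|\ti b_i^{(k)}|$ in terms of $|\ti b_2|$ rather than $|\ti a_2|$, a nontrivial step). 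Some such decay of the Lipschitz constant of the residual is indispensable for (2); a purely qualitative cluster-set argument cannot supply it.

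For (3) you have candidly flagged the obstruction yourself: the points at which a root-swap is needed are not a priori isolated, so the pointwise swaps need not assemble into a globally defined continuous selection. This is indeed where your plan would stall. The paper avoids swapping altogether. It sets $F = \{t : \ti a_2(t) = \ti a_2''(t) = 0\}$; on each component $I_k$ of the open set $I \setminus F$ the polynomial $P_{\ti a}$ splits (either because $\ti a_2 \ne 0$, or by the Case~(ii) rescaling), so by the inductive hypothesis there is a local differentiable system of roots, and these local systems can be patched across $I_k$ by applying a permutation on overlaps (no topological obstruction on an interval). On $F$ all roots are zero, and near any $s\in F$ every root is $o(|t-s|)$ (Case~(i)); so extending the system by zero on $F$ gives a differentiable system with vanishing derivative on $F$. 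You should replace the swap construction by this argument. Part (4) is fine as stated --- it does follow directly from (2).
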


\section{Preliminaries}

\subsection{Tschirnhausen transformation}

A monic polynomial 
\begin{equation*}
P_{a}(Z) = Z^n + \sum_{j=1}^n a_j Z^{n-j}, \quad  a=(a_1, ... ,a_n) \in \R_a^n 
\end{equation*}
 is said to be in \emph{Tschirnhausen form} if $a_1=0$.  Every $P_a$ can be transformed to such a form 
 by the substitution $Z \mapsto Z-\frac{a_1}n$, which we refer to as 
\emph{Tschirnhausen transformation}, 
\begin{equation}\label{tschirnhausenform}
P_{\tilde a}(Z) = P_a  ( Z-\frac{a_1}n  ) = Z^n   + \sum_{j=2}^n \ti a_j Z^{n-j}, \quad   \ti a=(\ti a_2, ... ,\ti a_n) \in \R_{\ti a}^{n-1}. 
\end{equation}
We identify the set of monic real polynomials $P_a$ of degree $n$ with $\R^{n}_a$, where $a=(a_1,a_2,\ldots,a_n)$,   
and those in Tschirnhausen form with $\R^{n-1}_{\tilde a}$.  
In what follows we write the effect of the Tschirnhausen transformation on a polynomial $P_a$ simply by adding tilde, $P_{\ti a}$.  
 
Thus let $P_{\ti a}$ be a monic polynomial in Tschirnhausen form. 
Then 
\[
	s_2 = -2\ti a_2 = \la_1^2 + \cdots + \la_n^2,	
\] 
where the $s_i$ denote the Newton polynomials in the roots $\la_j$ of $P_a$.
Thus, for a hyperbolic polynomial $P_{\ti a}$ in Tschirnhausen form,
\[
	s_2 = -2\ti a_2 \ge 0.	
\]

\begin{lemma} \label{lem:hyp}
  The coefficients of a hyperbolic polynomial $P_{\ti a}$ in Tschirnhausen form satisfy
  \[
    |\ti  a_i|^{\frac{1}{i}} \le  |s_2|^{\frac{1}{2}} = \sqrt{2}\, |\ti  a_2|^{\frac 1 2}, \quad i=2,\ldots,n.
  \]
\end{lemma}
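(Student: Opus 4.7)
The plan is to prove the equivalent statement $|e_i(\lambda)| \le s_2^{i/2}$ for $i = 2, \ldots, n$, where $\lambda_1, \ldots, \lambda_n$ denote the (necessarily real) roots of $P_{\ti a}$ and $e_i$ is the $i$-th elementary symmetric function, so that $\ti a_i = (-1)^i e_i$. The lemma then follows by taking $i$-th roots.

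First I would observe that hyperbolicity forces each individual root to be small: $\lambda_j^2 \le \sum_k \lambda_k^2 = s_2$, so $|\lambda_j| \le \sqrt{s_2}$. Factoring out $\lambda_j^2$ from each summand of $s_i = \sum_j \lambda_j^i$ then yields the uniform bound
\[
|s_i| \;\le\; \sum_j |\lambda_j|^{i-2}\,\lambda_j^2 \;\le\; s_2^{(i-2)/2}\, s_2 \;=\; s_2^{i/2} \qquad (i \ge 2).
\]

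Next I would argue by induction on $i \ge 2$ using the Newton--Girard recursion
\[
i\, e_i \;=\; \sum_{j=1}^i (-1)^{j-1}\, e_{i-j}\, s_j,
\]
together with $e_0 = 1$ and the Tschirnhausen vanishing $e_1 = 0$ (whence also $s_1 = 0$). The base case $i = 2$ is immediate from $2 e_2 = -s_2$. For the inductive step, the contributions at $j = 1$ and $j = i-1$ drop out, and every remaining term obeys $|e_{i-j}|\,|s_j| \le s_2^{(i-j)/2}\cdot s_2^{j/2} = s_2^{i/2}$: by the inductive hypothesis when $i - j \ge 2$, trivially when $j = i$ (using $e_0 = 1$), and by the power-sum bound above for the factor $|s_j|$ with $j \ge 2$. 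Summing these at most $i$ contributions and dividing by $i$ gives $|e_i| \le s_2^{i/2}$.

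No step should pose a real obstacle. Both ingredients---the pointwise bound $|\lambda_j| \le \sqrt{s_2}$ coming from hyperbolicity and the Newton--Girard recursion---are entirely elementary. The only point requiring care is the bookkeeping of the vanishing terms in Newton's identity, and this is precisely where the Tschirnhausen hypothesis $e_1 = 0$ enters to close the inductive estimate.
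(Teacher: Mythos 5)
Your proof is correct and shares the overall architecture of the paper's: both establish the power-sum bound $|s_i| \le s_2^{i/2}$ and then close by induction via Newton's identity, using the Tschirnhausen condition $e_1 = s_1 = 0$ to discard a term. The genuine point of difference is the proof of the power-sum bound itself. The paper proves it in the squared form $(\sum_j \lambda_j^i)^2 \le (\sum_j \lambda_j^2)^i$ by expanding both sides and matching each mixed term $2\lambda_\ell^i\lambda_m^i$ on the left against a block of binomial terms $\sum_{j=1}^{i-1}\binom{i}{j}\lambda_\ell^{2j}\lambda_m^{2(i-j)}$ on the right, which takes a small combinatorial argument. You bypass that entirely by observing $\lambda_j^2 \le s_2$ for every $j$, hence $|\lambda_j|^{i-2} \le s_2^{(i-2)/2}$, and then pulling the maximum out of the sum: $|s_i| \le \sum_j |\lambda_j|^{i-2}\lambda_j^2 \le s_2^{(i-2)/2}\sum_j\lambda_j^2 = s_2^{i/2}$. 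This one-line estimate is more elementary, avoids the binomial bookkeeping, and produces the identical inequality $|s_i|^{1/i} \le s_2^{1/2}$ the induction needs, so the two proofs then coincide.
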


\begin{proof}
  Newton's identities give $|\ti  a_i| \le \frac{1}{i} \sum_{j=2}^i |s_j||\ti a_{i-j}|$, where $\ti a_0=1$, which 
  together with 
  \begin{equation} \label{eq:hyp}
    |s_i|^{\frac{1}{i}} \le  |s_2|^{\frac{1}{2}}, \quad i=2,\ldots,n,
  \end{equation}
  will imply the result by induction on $i$. 
  To show \eqref{eq:hyp} we note that it is equivalent to 
  \begin{equation} \label{eq:roots}
    (\la_1^i + \cdots + \la_n^i)^2 \le  (\la_1^2 + \cdots + \la_n^2)^i.
  \end{equation}
  Each mixed term $\la_\ell^i \la_m^i$ on the left-hand side of \eqref{eq:roots} may be estimated 
  by the sum of all $\la_\ell^a \la_m^b$ 
  terms with $a,b>0$ on the right-hand side of \eqref{eq:roots}, in fact
  \[
    2 \la_\ell^i \la_m^i = 2 \la_\ell^2 \la_m^2 \la_\ell^{i-2} \la_m^{i-2}
    \le \la_\ell^2 \la_m^2 (\la_\ell^{2(i-2)} + \la_m^{2(i-2)})
    \le \sum_{j = 1}^{i-1} \binom{i}{j} \la_\ell^{2j} \la_m^{2(i-j)}.  
  \]
  This implies the statement.
\end{proof}

\subsection{Splitting}   \label{ssec:split}

The following well-known lemma (see e.g.\ \cite{AKLM98} or \cite{BM90}) is  an easy  consequence of the inverse function theorem.  

\begin{lemma} \label{split}
Let $P_a = P_b P_c$, where $P_b$ and $P_c$ are monic complex polynomials without common root.
Then for $P$ near $P_a$ we have $P = P_{b(P)} P_{c(P)}$
for analytic mappings $\R_a^n  \ni P  \mapsto b(P)  \in \R_b^{\deg P_b}$ and $\R_a^n  \ni P  \mapsto c(P) \in \R_c^{\deg P_c} $,
defined for $P$
near $P_a$, with the given initial values.
\end{lemma}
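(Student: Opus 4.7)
The plan is to apply the analytic inverse function theorem to the polynomial multiplication map. Set $k = \deg P_b$ and $\ell = \deg P_c$, so $k+\ell = n$, and let $(b_0,c_0)$ denote the coefficient vectors of the given factors $P_b$ and $P_c$. Consider the polynomial, hence real analytic, map
\[
  \mu : \R^k \times \R^\ell \to \R^n, \qquad \mu(b,c) = \text{coefficient vector of } P_b \cdot P_c,
\]
which sends $(b_0,c_0)$ to $a$. If the differential $d\mu_{(b_0,c_0)}$ is a linear isomorphism, the analytic inverse function theorem provides an analytic local inverse $P \mapsto (b(P),c(P))$, defined near $P_a$ and taking the prescribed initial value, and this is precisely the required factorization $P = P_{b(P)}\,P_{c(P)}$. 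The complex coefficient case is identical after identifying $\C^j$ with $\R^{2j}$; if one wants the factors to remain real, conjugate complex factors of $P_a$ are grouped together into real polynomials first.

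The computation of the differential is the Leibniz rule: identifying $\R^k$ (respectively $\R^\ell$) with the space of polynomials of degree $< k$ (respectively $< \ell$), we have
\[
  d\mu_{(b_0,c_0)}(\dot P_b,\dot P_c) = \dot P_b\, P_{c_0} + P_{b_0}\, \dot P_c,
\]
landing in the $n$-dimensional space of polynomials of degree $< n$. Since source and target have equal dimension, it is enough to check injectivity. Suppose $\dot P_b\, P_{c_0} + P_{b_0}\, \dot P_c = 0$; then $P_{b_0}$ divides $\dot P_b\, P_{c_0}$, and by the coprimality hypothesis $\gcd(P_{b_0},P_{c_0})=1$, so $P_{b_0}$ divides $\dot P_b$. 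The degree bound $\deg \dot P_b < \deg P_{b_0}$ forces $\dot P_b = 0$, and symmetrically $\dot P_c = 0$.

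The only nontrivial input is this coprimality step, which is exactly the hypothesis that $P_b$ and $P_c$ share no common root; everything else is the standard inverse function theorem, so there is no substantial obstacle to overcome.
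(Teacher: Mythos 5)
Your proof is correct and follows essentially the same route as the paper: apply the (analytic) inverse function theorem to the coefficient map $(b,c)\mapsto$ coefficients of $P_bP_c$. The only difference is how nondegeneracy of the differential is justified — the paper identifies $\det d\vh(b,c)$ with the resultant of $P_b$ and $P_c$ and invokes its nonvanishing, while you prove injectivity of $(\dot P_b,\dot P_c)\mapsto \dot P_b P_{c_0}+P_{b_0}\dot P_c$ directly from coprimality and degree bounds; these are two faces of the same standard fact about the Sylvester map.
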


\begin{proof}
The product $P_a = P_b P_c$ defines on the coefficients a polynomial mapping $\vh$ such that $a = \vh(b,c)$, 
where $a=(a_i)$, $b=(b_i)$, and $c=(c_i)$. The Jacobian determinant  
$\det d\vh(b,c)$ equals the resultant of $P_b$ and $P_c$ which is nonzero by assumption.   Thus $\varphi$ can be inverted locally. 
\end{proof}

If $P_{\ti a}$ is in Tschirnhausen form and $\tilde a\ne 0$ then, the sum of its roots being equal to zero, it always splits.   
The space of hyperbolic polynomials of degree $n$ in Tschirnhausen form can be identified with a 
closed semialgebraic subset $H_n$ of $\R^{n-1}_{\ti a}$.  By Lemma~\ref{lem:hyp}, 
the set $H_n^0:= H_n \cap \{\ti a_2 = -1\}$   
is compact.  

Let $p \in H_n \cap \{\ti a_2 \ne 0\}$.   
Then the polynomial
\[
  Q_{\underline a}(Z) := |\ti a_2|^{- \frac n 2} P_{\ti  a} (|\ti a_2|^{\frac 1 2} Z) 
  = Z^n -Z^{n-2} + |\ti  a_2|^{- \frac 3 2} \ti a_3 Z^{n-3} + \cdots + |\ti a_2|^{- \frac n 2} \ti  a_n 
\]
is hyperbolic and, by Lemma~\ref{split}, 
it \emph{splits}, i.e., $Q_{\underline a} = Q_{\underline b} Q_{\underline c}$ and $\deg Q_{\underline b}, \deg Q_{\underline c} < n$, 
on some open ball $B_p(r)$ centered at $p$.
Thus, there exist real analytic functions $\ps_i$ so that, on $B_p(r)$,
\[
   \underline b_i = \ps_i \big(|\ti a_2|^{-\frac{3}{2}} \ti a_3, \ldots, |\ti a_2|^{-\frac{n}{2}} \ti a_n\big), 
    \quad i = 1,\ldots,\deg P_b; 
\] 
likewise for $\underline c_j$. The splitting $Q_{\underline a} = Q_{\underline b} Q_{\underline c}$ induces a splitting 
$P_{\ti a} = P_b P_c$, where 
\begin{equation} \label{eq:3}
    b_i = |\ti a_2|^{\frac{i}{2}} \ps_i \big(|\ti a_2|^{-\frac{3}{2}} \ti a_3, \ldots, |\ti a_2|^{-\frac{n}{2}} \ti a_n\big), 
    \quad i = 1,\ldots,\deg P_b;
  \end{equation}
likewise for $c_j$. 
Shrinking $r$ slightly, we may assume that all partial derivatives of $\ps_i$ are separately bounded on $B_p(r)$. 
We denote by $\tilde b_j$ the coefficients of the polynomial $P_{\tilde b}$ resulting from $P_b$ by the  
Tschirnhausen transformation.

\begin{lemma} \label{lem:b2}
  In this situation we have
    $|\tilde b_2| \le 2n |\ti a_2|$.
\end{lemma}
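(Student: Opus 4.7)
My plan is to exploit the hyperbolicity of $P_{\tilde a}$ to express $\tilde b_2$ directly in terms of the roots of $P_b$, and then to dominate it by the analogous expression for $\tilde a_2$.

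First, I will use that we are working near $p \in H_n$, so that $P_{\tilde a}$ is hyperbolic and hence its factor $P_b$ is as well. Letting $k := \deg P_b$ and denoting the (real) roots of $P_b$ by $\la_1, \ldots, \la_k$, I observe that these form a subset of the $n$ roots of $P_{\tilde a}$. The Tschirnhausen transformation $P_b \mapsto P_{\tilde b}$ shifts these roots by their arithmetic mean $\bar\la := \tfrac{1}{k}\sum_{i=1}^k \la_i$, so the roots of $P_{\tilde b}$ are $\la_i - \bar\la$ and sum to zero. Applying the Newton identity $s_2 = -2\tilde b_2$ (already used for $P_{\tilde a}$ in Subsection~3.1) to $P_{\tilde b}$ then gives
\[
  \tilde b_2 = -\tfrac{1}{2} \sum_{i=1}^k (\la_i - \bar \la)^2 \le 0.
\]

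Next I will estimate the right-hand side:
\[
  \sum_{i=1}^k (\la_i - \bar \la)^2 \le \sum_{i=1}^k \la_i^2 \le \sum_{i=1}^n \la_i^2 = -2\tilde a_2 = 2|\tilde a_2|,
\]
where the first inequality is the elementary fact that the second moment about the mean does not exceed the second moment about the origin, and the final equality uses the Tschirnhausen form and hyperbolicity of $P_{\tilde a}$ (so $\tilde a_2 \le 0$). Together these yield $|\tilde b_2| \le |\tilde a_2|$, which is even stronger than the claimed $|\tilde b_2| \le 2n|\tilde a_2|$.

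There is no real obstacle in this argument; the only subtlety is to ensure we remain in the hyperbolic regime, which is the standing context of Subsection~\ref{ssec:split}. The looseness of the factor $2n$ in the stated bound is presumably deliberate, so that the same form of estimate can be reused in later sections without tracking sharper constants.
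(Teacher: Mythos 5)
Your proof is correct. It begins from the same identity as the paper, namely
\[
  2|\tilde b_2| \;=\; \sum_{j=1}^k \Big(\la_j + \frac{b_1}{k}\Big)^2 \;=\; \sum_{j=1}^k (\la_j - \bar\la)^2,
  \qquad \bar\la = \tfrac{1}{k}\textstyle\sum_j\la_j,
\]
but then departs at the key estimating step. The paper expands the square, bounds $b_1^2$ and $|\la_j||b_1|$ by Cauchy--Schwarz-type inequalities, and sums up, losing a factor of order $k+1$ and arriving at $|\tilde b_2|\le 2n|\tilde a_2|$. You instead invoke the elementary fact that the second moment about the mean is at most the second moment about any other point (in particular the origin), giving $\sum_j (\la_j-\bar\la)^2 \le \sum_j \la_j^2$ in a single stroke, and hence the sharper bound $|\tilde b_2|\le |\tilde a_2|$. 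Your version is both shorter and quantitatively stronger; the paper's cruder constant is clearly harmless for the application, but your argument is the cleaner one.
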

 
\begin{proof}
  Let $(\la_j)_{j=1}^k$ denote the roots of $P_b$ and $(\la_j)_{j=1}^n$ those of $P_a$.			
  Then, as $|b_1| \le \sum_{j=1}^k |\la_j| \le (k \sum_{j=1}^k \la_j^2)^{1/2}$ and thus 
  $|\la_j||b_1| \le k \sum_{j=1}^k \la_j^2$,  
  \begin{align*}
    2 |\tilde b_2| &= \sum_{j=1}^k \Big(\la_j + \frac{b_1}{k}\Big)^2 
    \le \frac{1}{k^2} \sum_{j=1}^k (k^2 \la_j^2 + b_1^2 + 2 k |\la_j||b_1|) \\
    & \le \frac{1}{k^2} \sum_{j=1}^k \big(k^2 \la_j^2 + k \sum_{\ell=1}^k \la_\ell^2 + 2 k^2 \sum_{\ell=1}^k \la_\ell^2\big) 
    =  2(k+1) \sum_{j=1}^k \la_j^2  \le 2 n \sum_{j=1}^n \la_j^2 = 4n |\ti a_2|, 
  \end{align*}
  as required.
\end{proof}

\subsection{Coefficient estimates} 

We shall need the following estimates. (Here it is convenient to number the coefficients in reversed order.)

\begin{lemma} \label{lem:interpol}
  Let $P(x) = a_0 + a_1 x + \cdots + a_n x^n \in \C[x]$ satisfy 
  $|P(x)| \le A$ for $x\in [0,B] \subseteq  \R$. Then 
  \[
    |a_j| \le (2n)^{n+1} A B^{-j}, \quad j=0,\ldots,n.
  \]
\end{lemma}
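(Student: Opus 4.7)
The plan is to use Lagrange interpolation at $n+1$ equally spaced points. Specifically, I would set
$x_k := kB/n$ for $k = 0, 1, \ldots, n$,
and write $P$ as
\[
  P(x) = \sum_{k=0}^n P(x_k) \prod_{j \ne k} \frac{x - x_j}{x_k - x_j}.
\]
From this formula the coefficient $a_i$ can be read off by expanding each $\prod_{j\ne k}(x-x_j)$ as a polynomial in $x$ and dividing by the constants $\prod_{j\ne k}(x_k-x_j)$.

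The next step is to estimate each factor. For the denominator, a direct calculation gives
\[
  \prod_{j \ne k}(x_k - x_j) = \Big(\frac{B}{n}\Big)^n (-1)^{n-k} k!(n-k)!,
\]
so its absolute value equals $(B/n)^n\, k!(n-k)!$. For the numerator, the coefficient of $x^i$ in $\prod_{j \ne k}(x - x_j)$ is, up to sign, the elementary symmetric polynomial of degree $n-i$ in the $x_j$'s with $j \ne k$, which is bounded in absolute value by $\binom{n}{i} B^{n-i}$ since each $x_j \in [0,B]$. Combining these with the hypothesis $|P(x_k)| \le A$ and summing over $k$ yields
\[
  |a_i| \;\le\; A \,\binom{n}{i}\, B^{n-i} \cdot \frac{n^n}{B^n} \sum_{k=0}^n \frac{1}{k!(n-k)!}
  \;=\; A\, B^{-i}\, \binom{n}{i}\, \frac{(2n)^n}{n!},
\]
where I used $\sum_{k=0}^n \binom{n}{k} = 2^n$.

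Finally, the elementary inequality $\binom{n}{i}/n! = 1/(i!(n-i)!) \le 1 \le 2n$ valid for all $n \ge 1$ and $0 \le i \le n$ gives
\[
  \binom{n}{i}\, \frac{(2n)^n}{n!} \;\le\; 2n \cdot (2n)^n \;=\; (2n)^{n+1},
\]
finishing the proof. There is essentially no conceptual obstacle; the only thing to be careful about is the bookkeeping with binomials and factorials so that the final constant comes out as the claimed $(2n)^{n+1}$ rather than something weaker. A cosmetically cleaner alternative would be to first rescale via $Q(y) := P(By)$ so that one is on $[0,1]$ and only the bound on the coefficients of $Q$ has to be proved; but since the scaling introduces only the factor $B^{-i}$, there is no real simplification, and I would proceed directly as above.
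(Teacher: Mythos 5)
Your proposal is correct and follows essentially the same approach as the paper: Lagrange interpolation at $n+1$ equidistant nodes on $[0,B]$, reading off coefficients via elementary symmetric polynomials in the nodes, and bounding. The paper first rescales to $A=B=1$ and then cites the interpolation formula without spelling out the factorial bookkeeping, whereas you carry out the computation directly on $[0,B]$; the details all check out (and in fact your estimate already yields the sharper bound $(2n)^n$, since $\binom{n}{i}/n! \le 1$, so the extra factor $2n$ in your last step is superfluous).
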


\begin{proof}
  We show the lemma for $A=B=1$.
  The general statement follows by applying this special case to the polynomial $A^{-1} P(By)$, $y=B^{-1}x$.
  Let $0 = x_0 < x_1 < \cdots < x_n=1$ be equidistant points. By Lagrange's interpolation formula 
  (e.g.\ \cite[(1.2.5)]{RS02}), 
  \[
    P(x) = \sum_{k=0}^n P(x_k) \prod_{\substack{j=0\\ j\ne k}}^n \frac{x-x_j}{x_k-x_j},
  \]
  and therefore
  \[
    a_j = \sum_{k=0}^n P(x_k) \prod_{\substack{j=0\\ j\ne k}}^n (x_k-x_j)^{-1} (-1)^{n-j} \si^k_{n-j},
  \]
  where $\si^k_j$ is the $j$th elementary symmetric polynomial in $(x_\ell)_{\ell \ne k}$.
  The statement follows. 
\end{proof}

A better constant can be obtained using Chebyshev polynomials; cf.\ \cite[Thm.~16.3.1-2]{RS02}. 

\subsection{Consequences of Taylor's theorem} 

The following two lemmas are classical.  We include them for the reader's convenience. 

\begin{lemma} \label{Glaeser}
  Let $I\subseteq \R$ be an open interval and 	
  let $f \in C^{1,1}(\overline I)$ be nonnegative or nonpositive. 
  For any $t_0 \in I$  and $M>0$ such that $I_{t_0}(M^{-1}) := \{t : |t-t_0| < M^{-1}|f(t_0)|^{\frac 1 2}\} \subseteq I$ 
  and  $M\ge (\on{Lip}_{I_{t_0} (M^{-1})}(f')) 
  ^{\frac 1 2} $ 
  we have 
  \begin{align*}
    |f'(t_0)|& \le\big(M + M^{-1} \on{Lip}_{I_{t_0}(M^{-1})}(f')\big)  |f(t_0)|^{\frac{1}{2}}    
    \le 2  M  |f(t_0)|^{\frac{1}{2}}. 
  \end{align*}
\end{lemma}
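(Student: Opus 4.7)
The plan is a classical Glaeser-type argument: Taylor-expand $f$ with a Lipschitz remainder and combine with the sign constraint on $f$ to control $f'(t_0)$. Without loss of generality I assume $f\ge 0$ (otherwise replace $f$ by $-f$), and set $L := \on{Lip}_{I_{t_0}(M^{-1})}(f')$. Since $f'$ is $L$-Lipschitz on $I_{t_0}(M^{-1})$, integrating the estimate $|f'(s)-f'(t_0)|\le L|s-t_0|$ yields
$$\bigl|f(t)-f(t_0)-f'(t_0)(t-t_0)\bigr| \le \frac{L}{2}(t-t_0)^2$$
for every $t\in \overline{I_{t_0}(M^{-1})}$, and using $f(t)\ge 0$ this gives the one-sided inequality
$$0 \le f(t_0) + f'(t_0)(t-t_0) + \frac{L}{2}(t-t_0)^2.$$

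The next step is to evaluate at the test point $t - t_0 = -\on{sign}(f'(t_0))\cdot h$ with $h \to M^{-1}|f(t_0)|^{1/2}$ from below, which the hypothesis $I_{t_0}(M^{-1})\subseteq I$ allows. The inequality becomes
$$|f'(t_0)|\cdot M^{-1}|f(t_0)|^{1/2} \le f(t_0) + \frac{L}{2}M^{-2}|f(t_0)|,$$
and dividing through by $M^{-1}|f(t_0)|^{1/2}$ (the degenerate case $f(t_0)=0$ is trivial, since an interior zero of a nonnegative differentiable function is a local minimum and forces $f'(t_0)=0$) produces
$$|f'(t_0)| \le \Bigl(M+\frac{L}{2M}\Bigr)|f(t_0)|^{1/2},$$
which is in fact slightly sharper than the first bound in the statement. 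The second bound is then immediate: the hypothesis $M\ge L^{1/2}$ gives $M^{-1}L\le M$, so $M + M^{-1}L \le 2M$.

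The argument is essentially a one-step Taylor estimate, so there is no serious obstacle. The only point that requires thought is the choice of the test point: its sign must be opposite to that of $f'(t_0)$ so the linear term turns into $-|f'(t_0)|\cdot|t-t_0|$, and its magnitude must equal $M^{-1}|f(t_0)|^{1/2}$ in order to balance the linear term against $f(t_0)$ and to make the quadratic remainder subordinate (which is exactly where the assumption $M\ge L^{1/2}$ gets used). Any other scaling produces a weaker estimate, which is precisely why the interval $I_{t_0}(M^{-1})$ is defined with that radius.
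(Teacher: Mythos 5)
Your proof is correct and follows essentially the same route as the paper's: a second-order Taylor estimate, the sign constraint $f\ge 0$, and evaluation at the test point $h=\pm M^{-1}|f(t_0)|^{1/2}$. The only (cosmetic) difference is that you use the Lipschitz remainder on $f'$ directly, whereas the paper writes the integral form of the remainder $\int_0^1(1-s)f''(t_0+hs)\,ds\,h^2$ (with $f''$ the a.e.\ derivative guaranteed by Rademacher); these are equivalent, and your version is marginally cleaner and gives the slightly sharper coefficient $L/(2M)$ in place of $L/M$, as you note.
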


\begin{proof}
	Suppose that $f$ is nonnegative; otherwise consider $-f$.
  It follows that the inequality holds true at the zeros of $f$. Let us assume that $f(t_0)>0$. 
  The statement follows from  
  \[
   0 \le  f(t_0+h) = f(t_0) + f'(t_0) h + \int_0^1 (1-s) f''(t_0+ h s)\, ds\, h^2
  \]
  with $h = \pm M^{-1}|f(t_0)|^{\frac{1}{2}}$. 
\end{proof}

\begin{lemma} \label{taylor} 
  Let $f\in C^{m-1,1}(\overline I)$.  There is a universal constant $C(m)$ such that for all $t\in I$ 
  and  $ k = 1,\ldots,m$,  
    \begin{align}\label{eq:1}  
    |f^{(k)}(t) | \le C(m) |I|^{-k} \bigl(\| f \|_{L^\infty(I)}+  \on{Lip}_{I}(f^{(m-1)})  |I|^m
    \bigr)  .  
  \end{align}
\end{lemma}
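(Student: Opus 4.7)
The plan is to reduce to an interval of unit length by affine rescaling, use Taylor's theorem to approximate $f$ by a polynomial of degree $m-1$ whose coefficients are the quantities we want to bound, and then apply Lemma~\ref{lem:interpol} to that polynomial.

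First, by the affine change of variables $s \mapsto a + |I|\, s$, where $a$ is the left endpoint of $I$, the statement reduces to the case $I=(0,1)$; indeed, if $g(s) := f(a+|I|\,s)$, then $g^{(k)}(s) = |I|^k f^{(k)}(a+|I|\,s)$, $\|g\|_{L^\infty((0,1))} = \|f\|_{L^\infty(I)}$, and $\on{Lip}_{(0,1)}(g^{(m-1)}) = |I|^m \on{Lip}_I(f^{(m-1)})$, so both sides of \eqref{eq:1} scale by the same factor $|I|^{-k}$.

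Next, for $I=(0,1)$ and $t \in I$, form the Taylor polynomial
\[
P(s) := \sum_{k=0}^{m-1} \frac{f^{(k)}(t)}{k!} (s-t)^k.
\]
Since $f \in C^{m-1,1}(\overline I)$, Taylor's theorem with integral remainder applied to $f^{(m-1)}$ (which is Lipschitz, hence absolutely continuous) gives
\[
|f(s) - P(s)| \le \frac{|s-t|^m}{m!}\on{Lip}_I(f^{(m-1)}) \le \frac{1}{m!}\on{Lip}_I(f^{(m-1)}), \quad s \in I,
\]
so $\|P\|_{L^\infty(I)} \le \|f\|_{L^\infty(I)} + \tfrac{1}{m!}\on{Lip}_I(f^{(m-1)})$.

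Then I would apply Lemma~\ref{lem:interpol}. Write $B := \max(t,1-t) \ge \tfrac 1 2$ and choose the sign $\varepsilon \in \{\pm 1\}$ so that $[t, t+\varepsilon B] \subseteq \overline I$. The substitution $u = \varepsilon(s-t)$ turns $P(s)$ into the polynomial $Q(u) := \sum_{k=0}^{m-1} \varepsilon^k \tfrac{f^{(k)}(t)}{k!}\, u^k$ of degree $\le m-1$, bounded on $[0,B]$ by $\|P\|_{L^\infty(I)}$. Lemma~\ref{lem:interpol} therefore gives
\[
\left|\frac{f^{(k)}(t)}{k!}\right| \le (2(m-1))^{m}\, \|P\|_{L^\infty(I)}\, B^{-k} \le C(m)\bigl(\|f\|_{L^\infty(I)} + \on{Lip}_I(f^{(m-1)})\bigr),\quad k=0,\dots,m-1,
\]
using $B^{-k} \le 2^k$. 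The case $k=m$ (interpreted where $f^{(m)}$ exists, i.e.\ a.e.) is immediate from $|f^{(m)}(t)| \le \on{Lip}_I(f^{(m-1)})$. Rescaling back via the first step yields \eqref{eq:1}.

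The plan is entirely straightforward; the only point requiring minor care is the choice of the subinterval in Step~3, since $t$ may lie anywhere in $I$, so we must pick whichever half of $I$ has $t$ as an endpoint in order to ensure $B \ge \tfrac 1 2$ and thus a uniform lower bound on the length of the interval to which Lemma~\ref{lem:interpol} is applied. Everything else is routine book-keeping with constants depending only on $m$.
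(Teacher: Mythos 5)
Your proof is correct and follows essentially the same route as the paper: normalize the interval, compare $f$ to its degree-$(m-1)$ Taylor polynomial at $t$, bound that polynomial uniformly via Taylor's theorem with the Lipschitz remainder, extract coefficient bounds from Lemma~\ref{lem:interpol} on the half of the interval having $t$ as an endpoint, and note that the $k=m$ case is immediate from the Lipschitz bound on $f^{(m-1)}$. The only cosmetic difference is that the paper works directly on $I=(-\delta,\delta)$ without first rescaling to $(0,1)$.
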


\begin{proof}
We may suppose that $I=(-\delta, \delta)$. If $t \in I$ then at least one of the two intervals
 $[t,t\pm \delta )$, say $[t,t+ \delta )$, is included in $I$.  By Taylor's formula, for $t_1\in [t,t +\delta )$, 
    \begin{align*}
      \Big|\sum_{k=0}^{m-1}  \frac{{f}^{(k)}(t)}{k!} (t_1-t)^k\Big| 
       & \le |f(t_1)| + | \int_0^1 \frac {(1-s)^{m-1}}{(m-1)!} f^{(m)}(t+s(t_1-t))\, ds\, (t_1-t)^m|  \\
      & \le \| f \|_{L^\infty(I)}+  \on{Lip}_{I}(f^{(m-1)})  \delta^m ,        
    \end{align*}
   and for $k \le m-1$  we may conclude \eqref{eq:1} by Lemma~\ref{lem:interpol}.   
    For $k=m$,  \eqref{eq:1} is trivially satisfied.  
\end{proof}

\section{Proof of Theorem~\ref{main}}

\subsection{First reductions} \label{ssec:red}

We assume that the maximal multiplicity $p$ of the roots equals the degree $n$ of $P_a$. 
If $p<n$ then we may use Lemma~\ref{split} to split $P_a$ locally in factors that have this property.  We discuss it in more detail at the end of the proof.

So let $P_{a} (t)$, $t \in I$, be a $C^{n-1,1}$-curve of hyperbolic polynomials of degree $n$. 
Without loss of generality we may assume that $n\ge 2$ and that $P_a = P_{\ti a}$ is in Tschirnhausen form.
Let $(\la_j(t))_{j=1}^n$, 
$t \in I$, be any continuous system of the roots of $P_{\ti a}$. Then
\[
\ti   a_2(t)=0  \quad\Longleftrightarrow\quad  \la_1(t)=\cdots= \la_n(t)=0.
\] 
We shall show that, for any relatively compact open subinterval $I_0 \Subset I$ and any 
$t_0 \in I_0 \setminus {\ti  a_2}^{-1}(0)$, 
there exists a neighborhood $I_{t_0}$ of $t_0$ in $I_0 \setminus {\ti  a_2}^{-1}(0)$ so that each $\la_j$ is Lipschitz on $I_{t_0}$ 
and the Lipschitz constant $\on{Lip}_{I_{t_0}}(\la_j)$ satisfies
\[
  \on{Lip}_{I_{t_0}}(\la_j) \le C(n,I_0,I_1)\, \big(\max_i \|\ti a_i\|_{C^{n-1,1}(\overline I_1)}^{\frac 1 i}\big), 
\]
where $I_1$ is any open interval 
satisfying $I_0 \Subset I_1 \Subset I$.  Here, recall, $ C(n,I_0,I_1)$ stands for a universal constant depending only on $n$, $I_0$, and $I_1$.

This will imply Theorem~\ref{main} by the following lemma. 

\begin{lemma} \label{lem:ext1}
  Let $I \subseteq \R$ be an open interval.
  If $f \in C^0(I)$ and each $t_0 \in I  \setminus f^{-1}(0)$ has a neighborhood $I_{t_0} \subseteq  I \setminus f^{-1}(0)$ 
  so that $L := \sup_{t_0 \in I \setminus f^{-1}(0)} \on{Lip}_{I_{t_0}}(f) < \infty$, 
  then $f$ is Lipschitz on $I$ and $\on{Lip}_{I}(f) = L$.
\end{lemma}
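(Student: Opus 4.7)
The plan is to exploit the openness of $U := I \setminus f^{-1}(0)$. Since $f$ is continuous, $U$ is open in $I$ and hence decomposes into a countable disjoint union of maximal open subintervals (its connected components). On each such component $J$, every point has by hypothesis a neighborhood in $J$ on which $f$ is $L$-Lipschitz, so I would first show $f|_J$ is $L$-Lipschitz by the usual compactness/chaining argument: given $s<t$ in $J$, the compact segment $[s,t]\subseteq J$ is covered by the family $\{I_{t_0} : t_0 \in [s,t]\}$, and extracting a finite subcover yields a partition $s = s_0 < s_1 < \cdots < s_N = t$ with each $[s_{k-1}, s_k]$ contained in some $I_{t_k}$; summing $|f(s_k)-f(s_{k-1})|\le L(s_k-s_{k-1})$ gives $|f(s)-f(t)|\le L(t-s)$.

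Next I would extend this to arbitrary $s<t$ in $I$. If $[s,t]\subseteq U$, the segment lies in a single component and the previous step concludes. Otherwise, by compactness of $[s,t]$ and continuity of $f$, the closed set $[s,t]\cap f^{-1}(0)$ has a minimum $s^*$ and a maximum $t^*$, with $f(s^*) = f(t^*) = 0$. The half-open intervals $[s,s^*)$ and $(t^*,t]$ (each possibly empty) lie entirely in $U$ and, being connected, each lies in a single component; thus the first step yields $|f(s)-f(u)|\le L(u-s)$ for $u \in [s,s^*)$, and the analogous estimate on $(t^*,t]$. Letting $u\to s^*$ (resp.\ $u\to t^*$) and invoking continuity of $f$ at these zeros gives
\[
  |f(s)| \le L(s^* - s), \qquad |f(t)| \le L(t - t^*),
\]
so the triangle inequality yields $|f(s)-f(t)|\le |f(s)|+|f(t)|\le L(s^*-s)+L(t-t^*)\le L(t-s)$. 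Hence $\on{Lip}_I(f)\le L$; the reverse inequality is immediate since restriction cannot increase the Lipschitz constant, and equality follows.

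The argument is essentially topological and I do not anticipate any real obstacle. The only mildly delicate point is that the neighborhoods $I_{t_0}$ are guaranteed only away from $f^{-1}(0)$, so one cannot directly chain through a zero of $f$; this is precisely what is circumvented by splitting $[s,t]$ at the extreme zeros $s^*, t^*$ and passing to the limit from inside a component of $U$, using continuity of $f$ and the vanishing of $f$ at the boundary.
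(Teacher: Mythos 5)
Your proposal is correct and follows essentially the same route as the paper: prove the $L$-Lipschitz estimate on each connected component of $I\setminus f^{-1}(0)$ (by a chaining/Lebesgue-number argument), then pass to the closure of a component by continuity, and finally cross the zero set by exploiting that $f$ vanishes there, so the triangle inequality gives the bound. Your version is in fact slightly more careful than the paper's: by splitting at the extreme zeros $s^*,t^*$ of $[s,t]\cap f^{-1}(0)$ rather than assuming $t,s$ lie in closures $\overline J_1,\overline J_2$ of two components, you cover cleanly the case where an endpoint sits in the interior of $f^{-1}(0)$, which the paper's phrasing implicitly leaves to the reader.
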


\begin{proof}
  Let $t,s \in I$. It is easy to see that $|f(t)-f(s)| \le L |t-s|$ if $t$ and $s$ belong to the same connected 
  component $J$ of $I  \setminus f^{-1}(0)$.
  By continuity, this estimate also holds on the closed interval $\overline J$. If $t \in \overline J_1$ and 
  $s \in \overline J_2$, $t<s$, and $\overline J_1 \cap \overline J_2 = \emptyset$, let $r_i$ be the endpoint 
  of $\overline J_i$ so that $s \le r_1 < r_2\le t$. 
  Then 
  \[
    |f(t)-f(s))| \le |f(t)-f(r_2)| + |f(r_1)-f(s)| \le L |t-s|. 
  \] 
  Clearly, $\on{Lip}_{I}(f) = L$.
\end{proof}

\subsection{Convenient assumption}

The proof of the statement in Subsection \ref{ssec:red} will be carried out by induction on the degree of $P_a$. 
We replace the assumption of Theorem~\ref{main} by a new assumption 
that will be more convenient for the inductive step. Before we state it we need a bit of notation. 

For open intervals $I_0$ and $I_1$ so that $I_0 \Subset I_1 \Subset I$, we set 
\[
	I_i':= I_i \setminus {\ti a_2}^{-1}(0), \quad i=0,1.
\]
For $t_0 \in I_0'$ and $r>0$ consider the interval 
\[
	I_{t_0}(r) := \big(t_0 -r |\ti a_2(t_0)|^\frac{1}{2},t_0 + r |\ti a_2(t_0)|^\frac{1}{2}\big).
\]

\begin{assumption*} 
	Let $I_0 \Subset I_1$ be open intervals.	  Suppose that $(\ti a_i)_{i=2}^n \in C^{n-1,1}(\overline I_1,\R^{n-1})$ are 
	the coefficients of a hyperbolic polynomial $P_{\ti a}$ of degree $n$ 
  in Tschirnhausen form. Assume that there is a constant $A>0$,  
  so that for all $t_0 \in I_0'$, $t \in I_{t_0}(A^{-1})$, $i = 2,\ldots,n$, $k = 0,\ldots,n$,
  \begin{gather}
    \tag{A.1}\phantomsection\label{A.2} I_{t_0}(A^{-1}) \subseteq I_1,\\ 
    \tag{A.2}\phantomsection\label{A.3} 2^{-1} \le \frac{\ti  a_2(t)}{\ti a_2(t_0)} \le 2, \\ 
    \tag{A.3}\phantomsection\label{A.4} |{\ti a_i}^{(k)}(t)| \le C(n)\,  A^{k}\, |\ti a_2(t)|^{\frac{i-k}{2}},
  \end{gather}
  where $C(n)$ is a universal constant.  
  For $k=n$, \eqref{A.4} is understood to hold almost everywhere, by Rademacher's theorem. 
\end{assumption*}

  Condition \eqref{A.4} implies that  
  \begin{equation} \tag{A.4}\label{A.5}
    \big|\p_t^k \big(|\ti a_2|^{- \frac i 2} \ti  a_i\big)(t)\big| \le C(n)\,  A^k\, |\ti a_2(t)|^{-\frac k 2}.
  \end{equation}
  More generally, if we assign $\ti a_i$ the weight $i$ and $|\ti a_2|^{\frac 1 2}$ the weight $1$ and let 
  $L(x_2,\ldots,x_n,y) \in \R[x_2,\ldots,x_n,y,y^{-1}]$ be weighted homogeneous of degree $d$, 
  then    
  \[
    \big|\p_t^k L\big(\ti a_2,\ldots, \ti a_n,|\ti a_2|^{\frac 1 2}\big)(t)\big| \le C(n,L)\, A^k\, |\ti a_2(t)|^{\frac{d-k}{2}}.
  \] 

\subsection{Inductive step}  \label{ssec:indstep}

Let $P_{\ti a}$, $I_0$, $I_1$, $A$,  $t_0$ be as in Assumption.  
We will show by induction on $\deg P_{\ti a}$ that any continuous system of the roots of $P_{\ti a}$ 
is Lipschitz on $I_0$ with Lipschitz constant bounded from above by $C(n)\, A$. 
First we establish the following.
\begin{itemize}
 	\item For some constant $C_1(n)>1$, the polynomial $P_{\ti a}(t)$ splits on the interval $I_{t_0}(C_1(n)^{-1}A^{-1})$,
 	 that is we have $P_{\ti a} (t) = P_b(t) P_c(t)$, where $P_b$ and $P_c$ are 
 	$C^{n-1,1}$-curves of hyperbolic polynomials of degree strictly smaller than $n$.
 	\item After applying the Tschirnhausen transformation $P_b \leadsto P_{\tilde b}$, the coefficients 
 	$(\tilde b_i)_{i=2}^{\deg P_b}$ satisfy \eqref{A.2}--\eqref{A.4} for suitable neighborhoods $J_0$, $J_1$ of $t_0$, 
 	and a constant $B=C(n)\, A$ in place of $A$. 
\end{itemize} 

\medskip
We restrict our curve of hyperbolic polynomials $P_{\ti a}$ to $I_{t_0}(A^{-1})$ and consider 
\[
	\underline a:= \big(-1,|\ti  a_2|^{-\frac 3 2} \ti a_3,\ldots,|\ti  a_2|^{-\frac n 2} \ti a_n\big) : 
	I_{t_0}(A^{-1}) \to  \R^{n-1}_{\underline a }.
\]
Then $\underline a$ is continuous, by \eqref{A.3}, and bounded, by Lemma~\ref{lem:hyp}. 
Moreover, by \eqref{A.5} and \eqref{A.3}, there is a universal constant $C_1(n)$ so that, for $t \in I_{t_0}(A^{-1})$,
\begin{equation}\label{eq:barader}
	\|\underline  a'(t)\| \le C_1(n)\, A\, |\ti a_2(t_0)|^{-\frac 1 2}.  	
\end{equation}  
According to Subsection \ref{ssec:split}, choose a finite cover of  $H_n^0 $   
 by  open balls $B_{p_\al}(r)$, $\al \in \De$, on which we 
have a splitting $P_{\ti a} = P_b P_c$ with coefficients of $P_b$ given by \eqref{eq:3}.
There exists $r_1>0$ such that for any $p \in H_n^0 $ there is $\al \in \De$ so that 
$B_p(r_1) \subseteq B_{p_\al}(r)$; $2r_1$ is a Lebesgue number of the cover $\{B_{p_\al}(r)\}_{\al \in \De}$.     
Then, if $C_1(n)$ is the constant from \eqref{eq:barader}, 
\begin{equation} \label{eq:J1}
  J_1 := I_{t_0}(r_1 C_1(n)^{-1} A^{-1})  \subseteq {\underline a}^{-1} (B_{\underline a(t_0)}(r_1)),
\end{equation}
and on $J_1$ we have a splitting $P_{\ti a}(t) = P_b(t) P_c(t)$ with $b_i$ given by \eqref{eq:3}. Fix $r_0 < r_1$ and let 
\begin{equation} \label{eq:J0}
  J_0 := I_{t_0}(r_0 C_1(n)^{-1} A^{-1}).
\end{equation}
(Here we assume without loss of generality that $r_1\le C_1(n)$.)

Let us show that the coefficients $(\tilde b_i)_{i=2}^{\deg P_b}$ of $P_{\tilde b}$ satisfy \eqref{A.2}--\eqref{A.4} 
for the intervals $J_1$ and $J_0$ from \eqref{eq:J1} and \eqref{eq:J0}. 
To this end we set 
\[
  J_i':= J_i \setminus \tilde b_2^{-1}(0), \quad i=0,1,
\]
consider, 
for $t_1 \in J_0'$ and $r>0$, 
\[
  J_{t_1}(r) := \big(t_1 -r |\tilde b_2(t_1)|^\frac{1}{2},t_1 + r |\tilde b_2(t_1)|^\frac{1}{2}\big),
\]
and prove the following lemma.

\begin{lemma}\label{lem:induction}
  There exists a constant $\ti C = \ti C(n,r_0,r_1) >1$ such that for $B= \tilde C A $ and  
  for all $t_1 \in J_0'$, $t \in J_{t_1}(B^{-1})$, $i = 2,\ldots,\deg P_b$, $k = 0,\ldots,n$,
  \begin{gather}
    \tag{B.1}\phantomsection\label{B.2} J_{t_1}(B^{-1}) \subseteq J_1, \\ 
    \tag{B.2}\phantomsection\label{B.3} 2^{-1} \le \frac{\tilde b_2(t)}{\tilde b_2(t_1)} \le 2, \\ 
    \tag{B.3}\phantomsection\label{B.4} |{\tilde b_i}^{(k)}(t)| \le  C(n)\, B^k\, |\tilde b_2(t)|^{\frac{i-k}{2}}. 
  \end{gather}
  for some universal constant $C(n)$.
\end{lemma}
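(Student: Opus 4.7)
The plan is a bootstrap: first derive auxiliary derivative estimates for $\tilde b_i$ in the old scale $u := |\tilde a_2|^{1/2}$, then upgrade the first-order estimate for $\tilde b_2$ to the new scale $w := |\tilde b_2|^{1/2}$ via Lemma~\ref{Glaeser}, and finally obtain (B.3) via a Taylor interpolation on the intrinsically small interval $J_{t_1}(B^{-1})$. For the first stage, the formulas \eqref{eq:3} together with the Tschirnhausen shift $P_b \leadsto P_{\tilde b}$ express $\tilde b_i$ as $u^i\, \Phi_i(v_3, \ldots, v_n)$, where $v_j := |\tilde a_2|^{-j/2}\tilde a_j$ and $\Phi_i$ is analytic with all derivatives uniformly bounded on the compact domain. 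The chain rule, combined with \eqref{A.5} for the $v_j$ and the elementary bound $|\p_t^k u| \le C(n)\, A^k\, u^{1-k}$, then yields
\[
  |\tilde b_i^{(k)}(t)| \le C(n)\, A^k\, u(t)^{i-k}, \quad t \in J_1,\ k = 0, \ldots, n;
\]
in particular $|\tilde b_2''| \le C(n)\, A^2$ on $J_1$.

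Since $\tilde b_2 = -\tfrac{1}{2}\sum (\la_j - \bar\la)^2 \le 0$ is $C^{1,1}$, Lemma~\ref{Glaeser} applies to $\tilde b_2$ with $M = C(n, r_0, r_1)\, A$ chosen large enough that $M^2 \ge \on{Lip}(\tilde b_2')$ and $I_{t_1}(M^{-1}) \subseteq J_1$; the latter inclusion uses $w(t_1) \le \sqrt{2n}\, u(t_1) \le 2\sqrt n\, u(t_0)$ (from Lemma~\ref{lem:b2} and \eqref{A.3}) together with $\on{dist}(t_1, \p J_1) \gtrsim (r_1 - r_0)\, C_1(n)^{-1} A^{-1}\, u(t_0)$. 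Glaeser yields $|\tilde b_2'(t_1)| \le C(n, r_0, r_1)\, A\, w(t_1)$ for every $t_1 \in J_0'$. A second-order Taylor expansion of $\tilde b_2$ at $t_1$, combined with this and $|\tilde b_2''| \le C(n)\, A^2$, gives for $t \in J_{t_1}(B^{-1})$
\[
  |\tilde b_2(t) - \tilde b_2(t_1)| \le C(n,r_0,r_1)\,(A/B)\, w(t_1)^2 + C(n)\,(A/B)^2\, w(t_1)^2.
\]
Taking $B = \tilde C A$ with $\tilde C = \tilde C(n, r_0, r_1)$ sufficiently large makes this at most $|\tilde b_2(t_1)|/2$, giving (B.2); the same largeness forces $B^{-1} w(t_1) \le \on{dist}(t_1, \p J_1)$, giving (B.1).

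For (B.3), apply Lemma~\ref{taylor} with $m = n$ to $f = \tilde b_i$ on $I := J_{t_1}(B^{-1})$. Lemma~\ref{lem:hyp} applied to the hyperbolic polynomial $P_{\tilde b}$, combined with (B.2), gives $\|f\|_{L^\infty(I)} \le C(n)\, w(t_1)^i$. For the Lipschitz constant of $f^{(n-1)}$, use the first-stage bound $|\tilde b_i^{(n)}| \le C(n)\, A^n\, u^{i-n}$; since $\deg P_b < n$ forces $i < n$, the easy direction $u \ge w/\sqrt{2n}$ of Lemma~\ref{lem:b2} combined with (B.2) gives $u^{i-n} \le C(n)\, w(t_1)^{i-n}$ on $I$, hence $\on{Lip}_I(f^{(n-1)})\, |I|^n \le C(n)\,(A/B)^n\, w(t_1)^i \le C(n)\, w(t_1)^i$. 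Lemma~\ref{taylor} then delivers $|\tilde b_i^{(k)}(t)| \le C(n)\, B^k\, w(t_1)^{i-k}$, and one further application of (B.2) converts $w(t_1)$ to $w(t)$, giving (B.3).

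The main obstacle is the mismatch between the scales $u$ and $w$, which can differ drastically when the roots of $P_b$ form a tight sub-cluster of those of $P_{\tilde a}$; this rules out any naive pointwise conversion of the old-scale bounds into new-scale ones at intermediate derivatives. The resolution is the Glaeser-to-Taylor combination: Lemma~\ref{Glaeser} exploits the sign $\tilde b_2 \le 0$ to furnish the single first-order bound in the new scale, while Lemma~\ref{taylor}, applied over the intrinsically short interval $J_{t_1}(B^{-1})$, interpolates all remaining derivatives; the strict inequality $\deg P_b < n$ is crucial in ensuring that the top-derivative scale conversion ($k = n$) runs in the favorable direction $u^{-1} \le \sqrt{2n}\, w^{-1}$.
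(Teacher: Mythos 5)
Your proof is correct and follows essentially the same approach as the paper: derive old-scale bounds $|\tilde b_i^{(k)}|\le C(n)A^k|\tilde a_2|^{(i-k)/2}$ from the splitting formulas \eqref{eq:3}, then pass to the new scale $|\tilde b_2|$ by applying Lemma~\ref{Glaeser} to the sign-definite $\tilde b_2$ and interpolating the remaining derivatives via Lemma~\ref{taylor} on the short interval $J_{t_1}(B^{-1})$. The only organizational difference is that you run Lemma~\ref{taylor} once with $m=n$ (exploiting $\deg P_b<n$ so that the top-order conversion $u^{i-n}\le C(n)w^{i-n}$ from Lemma~\ref{lem:b2} goes the right way), whereas the paper first disposes of the cases $k\ge i$ directly from Lemma~\ref{lem:b2} and then invokes the packaged Lemma~\ref{shortcut}, which internally carries out the same Glaeser-plus-Taylor step with $m=i$.
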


\begin{proof}
  If 
  \begin{equation*} \label{eq:D1}
    B \ge  (r_1-r_0)^{-1}\, 2 \sqrt{n}\, C_1(n)\,  A,
  \end{equation*}
  then by Lemma~\ref{lem:b2} and \eqref{A.3},
  \[
    B^{-1}|\tilde b_2(t_1)|^{\frac 1 2}  
    \le (r_1-r_0)\, C_1(n)^{-1}\,  A^{-1}\, |\ti  a_2(t_0)|^{\frac 1 2},
  \]
  and hence \eqref{B.2} follows from \eqref{eq:J1} and \eqref{eq:J0}, since $t_1 \in J_0$.

  Next we claim that, on $J_1$,
  \begin{equation} \label{eq:psi}
    \big|\p_t^k \ps_i \big(|\ti a_2|^{-\frac 3 2} \ti a_3,\ldots,|\ti a_2|^{-\frac n 2} \ti a_n\big)\big| 
    \le C(n)\,  A^k\, |\ti a_2|^{-\frac k 2}.
  \end{equation}
  To see this we differentiate the following equation $(k-1)$ times, apply  
  induction on $k$, and use \eqref{A.5},
  \begin{align}\label{derivatives}
    \p_t \ps_i \big(|\ti a_2|^{-\frac 3 2} \ti a_3,\ldots,|\ti a_2|^{-\frac n 2} \ti a_n\big) 
    = \sum_{j=3}^n (\p_{j-2} \ps_i)(\underline a) \p_t \big(|\ti a_2|^{-\frac j 2} \ti a_j\big);
\end{align}
  recall that all partial derivatives of the $\ps_i$'s are separately bounded on $\underline a(J_1)$ and these bounds are 
  universal. From \eqref{eq:3} and \eqref{eq:psi} we obtain, on $J_1$ and for all $i=1,\ldots,\deg P_b$, $k=0,\ldots,n$,
  \begin{equation} \label{eq:b1}
    |b_i^{(k)}| \le C(n)\,  A^k\, |\ti a_2|^{\frac{i-k}{2}},
  \end{equation}
  thus, as the Tschirnhausen transformation preserves the weights of the coefficients, 
  cf.\ \eqref{A.5},
  \[
    |{\tilde b_i}^{(k)}| \le C(n)\,  A^k\, |\ti a_2|^{\frac{i-k}{2}},
  \]
  and so, by Lemma~\ref{lem:b2},   
  \begin{equation*} \label{eq:i-k<0}
    |{\tilde b_i}^{(k)}| \le C (n)\,  A^k\, |\tilde b_2|^{\frac{i-k}{2}} \quad \text{ if } i-k \le 0.
  \end{equation*} 

This shows \eqref{B.4} for $i\le k$.  \eqref{B.4} for $k=0$ follows from Lemma \ref{lem:hyp}.   
\eqref{B.3} and the remaining inequalities of \eqref{B.4}, i.e., for  $0<k<i$, follow now from Lemma \ref{shortcut} below.  
  \end{proof}

\begin{lemma}\label{shortcut}
There exists a constant $C(n)\ge 1$ such that the following holds.  
If \eqref{A.2} and   \eqref{A.4}  for  $k=0 $ and $k=i$, $i=2, ... ,n$,
  are satisfied, then so are \eqref{A.3} and  \eqref{A.4}  for  $k<i$, $i=2, ... ,n$,  after 
  replacing  $A$ by $C(n) A$.   
\end{lemma}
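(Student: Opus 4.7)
The plan is to bootstrap from the two extreme cases $k=0$ and $k=i$ to every intermediate derivative by combining Glaeser's Lemma~\ref{Glaeser} with the Taylor--interpolation Lemma~\ref{taylor}. I first establish \eqref{A.3}, which says that $|\tilde a_2|$ is essentially constant on the (shrunken) interval; this comparability then lets me transfer the $L^\infty$ hypothesis (case $k=0$) and the top--order hypothesis (case $k=i$) onto a common subinterval and feed them into Lemma~\ref{taylor} to fill in \eqref{A.4} for $0<k<i$.

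\textbf{Step 1 (proving \eqref{A.3}).} By hyperbolicity $\tilde a_2\le 0$, and the case $i=k=2$ of the hypothesis is exactly $|\tilde a_2''|\le C(n)A^2$ on $I_{t_0}(A^{-1})$. I apply Lemma~\ref{Glaeser} to $f=\tilde a_2$ with $M=\max(C(n)^{1/2},1)\,A$: the two required conditions hold because $M\ge A$ ensures $I_{t_0}(M^{-1})\subseteq I_{t_0}(A^{-1})\subseteq I_1$ and $M^2\ge \on{Lip}_{I_{t_0}(M^{-1})}(\tilde a_2')$ by the $\tilde a_2''$ bound. This gives $|\tilde a_2'(t_0)|\le C'(n)\,A\,|\tilde a_2(t_0)|^{1/2}$. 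Plugging this estimate and the second--derivative bound into the second-order Taylor expansion of $\tilde a_2$ at $t_0$, and restricting to $|t-t_0|\le \tilde A^{-1}|\tilde a_2(t_0)|^{1/2}$ with $\tilde A=C(n)A$ sufficiently large, I get $|\tilde a_2(t)-\tilde a_2(t_0)|\le\tfrac12|\tilde a_2(t_0)|$, which is \eqref{A.3} with $A$ replaced by $\tilde A$.

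\textbf{Step 2 (proving \eqref{A.4} for $0<k<i$).} Fix $t\in I_{t_0}(\tilde A^{-1})$ and choose an interval $J$ with $t\in J\subseteq I_{t_0}(A^{-1})$ and $|J|\asymp A^{-1}|\tilde a_2(t_0)|^{1/2}$; the ratio $\tilde A/A=C(n)$ is taken large enough that such a $J$ fits around $t$. The same Taylor expansion of Step~1, now applied at every point of $J$, shows $|\tilde a_2|\le C(n)|\tilde a_2(t_0)|$ throughout $J$, so the hypothesis with $k=0$ yields
\[
 \|\tilde a_i\|_{L^\infty(J)}\le C(n)\,|\tilde a_2(t_0)|^{i/2},
\]
while the hypothesis with $k=i$ gives $\on{Lip}_J(\tilde a_i^{(i-1)})\le C(n)A^i$. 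Lemma~\ref{taylor} applied with $m=i$ then produces
\[
 |\tilde a_i^{(k)}(t)|\le C(n)\,|J|^{-k}\bigl(\|\tilde a_i\|_{L^\infty(J)}+\on{Lip}_J(\tilde a_i^{(i-1)})\,|J|^{i}\bigr)\le C(n)\,A^k\,|\tilde a_2(t_0)|^{(i-k)/2},
\]
and \eqref{A.3} from Step~1 replaces $|\tilde a_2(t_0)|$ by $|\tilde a_2(t)|$ up to a constant (using $k<i$), which is \eqref{A.4} with $A$ replaced by $\tilde A$.

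\textbf{Main obstacle.} The argument is essentially just interpolation; the only delicate point is geometric, namely that the interval $J$ in Step~2 must lie inside $I_{t_0}(A^{-1})$ (where the boundary hypotheses apply), must have length $\asymp A^{-1}|\tilde a_2(t_0)|^{1/2}$ (so that Lemma~\ref{taylor} yields the correct power of $A$), and must still contain $t\in I_{t_0}(\tilde A^{-1})$ with room on both sides. A single universal choice $\tilde A=C(n)A$ with $C(n)$ large enough simultaneously reconciles these three constraints and makes the Glaeser step of Step~1 legal.
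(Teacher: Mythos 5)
Your proposal is correct and matches the paper's own proof: you first combine Glaeser's Lemma~\ref{Glaeser} with a second-order Taylor expansion at $t_0$ to bound $|\tilde a_2'(t_0)|$ by $C(n)A|\tilde a_2(t_0)|^{1/2}$ and thereby obtain the ratio estimate \eqref{A.3} on a shrunken interval, and then use Lemma~\ref{taylor} to interpolate between the $k=0$ and $k=i$ bounds to get \eqref{A.4} for $0<k<i$. The paper compresses the interpolation step into the single line ``the other inequalities follow from Lemma~\ref{taylor}'' while you spell out the choice of the interval $J$ and the resulting estimates; the underlying argument is the same.
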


\begin{proof}
By assumption, $\on{Lip}_{I_{t_0}(A^{-1})}(\ti a_2'))\le C(n) A^2$.  Thus, by Lemma \ref{Glaeser} 
for $f=\ti a_2$ and  $M = C(n)^{\frac 1 2} A$, we 
get 
$$
|\ti a'_2(t_0)| \le 2 M |\ti a_2 (t_0)|^{\frac 1 2}.  
$$
It follows that, for $t\in I_{t_0}((6M)^{-1})$, 
\begin{align}\label{long}
	\frac{|\ti a_2(t) - \ti a_2(t_0)|}{|\ti a_2(t_0)|} \le 
    	\frac{|\ti a_2'(t_0)|}{|\ti a_2(t_0) |}|t-t_0| + \int_0^1 (1-s) |\ti a_2''(t_0 + s (t-t_0))| ds\, \frac{|t-t_0|^2}{|\ti a_2(t_0)|} 
     	\le \frac{1}{2}
  	\end{align}
That	implies \eqref{A.3}.  
  The other inequalities follow from Lemma \ref{taylor}.  
\end{proof}

\subsection{End of inductive step}

In $J_1$, 
any continuous root $\la_j$ of $P_{\ti a}$, where $P_{\ti a}$ is in Tschirnhausen form, is a root of either $P_b$ or 
$P_c$.
Say it is a root of $P_b$. Then it has the form
\begin{align}\label{sumforaroot}
\la_j(t) = - \frac{b_1(t)}{\deg P_b} + \mu_j(t),
\end{align}
where $\mu_j$ is a continuous root of $P_{\tilde b}$ defined on a neighborhood of $t_0$. 
By the inductive assumption we may assume that $\mu_j $ is Lipschitz with Lipschitz constant 
bounded from above by $C(n) B$.
Hence $\la_j$ is Lipschitz with Lipschitz constant 
bounded from above by $C(n) A$ (the constant $C(n)$ changes), as $B= \tilde C\, A$ and by \eqref{eq:b1} for $i=k=1$.  
This ends the inductive step.

\subsection{\texorpdfstring{$P_{\ti a}$}{Pa} satisfies Assumption}

Now we show that $P_{\ti a}$ always satisfies Assumption.  The choice of $A$ will provide the upper 
bound on the Lipschitz constant of the roots. 

\begin{proposition} \label{lem:1}
	Let $P_{\ti a}(t)$, $t \in I$, be a $C^{n-1,1}$-curve of hyperbolic polynomials of degree $n$ 
  	in Tschirnhausen form, and let $I_0$ and $I_1$ be open intervals satisfying $I_0 \Subset I_1 \Subset I$. 
  	Then its coefficients $(\ti a_i)_{i=2}^n$ satisfy \eqref{A.2}--\eqref{A.4}.
\end{proposition}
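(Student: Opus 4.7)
The plan is to set
\[
  A := C_0 \cdot \max_{i=2,\ldots,n} \|\ti a_i\|_{C^{n-1,1}(\overline I_1)}^{1/i}
\]
for a constant $C_0 = C_0(n, I_0, I_1) \ge 1$ to be chosen sufficiently large at the end, and then verify \eqref{A.2}--\eqref{A.4} in turn for this $A$.

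Condition \eqref{A.2} follows at once from the choice of $A$: since $A \ge C_0 \|\ti a_2\|_{\infty}^{1/2} \ge C_0 |\ti a_2(t_0)|^{1/2}$ for every $t_0 \in I_0'$, the half-length $A^{-1}|\ti a_2(t_0)|^{1/2}$ of $I_{t_0}(A^{-1})$ is at most $1/C_0$, which is below $\mathrm{dist}(I_0, \partial I_1)$ once $C_0 \ge \mathrm{dist}(I_0, \partial I_1)^{-1}$. For \eqref{A.4} at $k = 0$ I would invoke Lemma \ref{lem:hyp} directly, giving $|\ti a_i| \le 2^{n/2}|\ti a_2|^{i/2}$. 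For \eqref{A.4} at $k = i$ I would apply Lemma \ref{taylor} to $\ti a_i$ on $\overline I_1$ with $m = n$, which produces $|\ti a_i^{(i)}(t)| \le C(n, I_1)\|\ti a_i\|_{C^{n-1,1}(\overline I_1)} \le C(n, I_1) C_0^{-i} A^i$; taking $C_0$ large absorbs the factor $C(n, I_1)$ and brings this below $C(n) A^i$.

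With \eqref{A.2} and \eqref{A.4} at $k = 0$ and $k = i$ in place, Lemma \ref{shortcut} immediately supplies \eqref{A.3} and \eqref{A.4} for $0 < k < i$, up to a universal enlargement $A \mapsto C(n) A$ that I would absorb into $C_0$. The remaining range $i < k \le n$ of \eqref{A.4} I would handle directly: Lemma \ref{taylor} still bounds $|\ti a_i^{(k)}(t)|$ by $C(n, I_1) C_0^{-i} A^i$, while the choice of $A$ forces $\|\ti a_2\|_\infty \le A^2$ and hence $|\ti a_2(t)|^{(i-k)/2} \ge A^{i-k}$, so that $C(n) A^k |\ti a_2(t)|^{(i-k)/2} \ge C(n) A^i$ dominates the left-hand side once $C_0$ is sufficiently large.

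The main technical obstacle lies in the intermediate range $0 < k < i$ of \eqref{A.4}: Taylor's theorem applied directly on $\overline I_1$ only produces bounds with factors $|I_1|^{-k}$, which are far too weak when $\ti a_2(t)$ is much smaller than $|I_1|^2$, since the correct estimate must scale with $|\ti a_2(t)|^{(i-k)/2}$. This gap is precisely what Lemma \ref{shortcut} is designed to bridge, interpolating between the boundary cases $k = 0$ and $k = i$ via the Glaeser-type inequality (Lemma \ref{Glaeser}); once Lemma \ref{shortcut} is invoked, no further argument is needed in that range.
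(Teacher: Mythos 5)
Your proof is correct, and the analytic content is the same as the paper's, but the route is organized differently in two respects. You choose the coarser $A = C_0\max_i\|\ti a_i\|_{C^{n-1,1}(\overline I_1)}^{1/i}$, which directly yields the second line of \eqref{finalbounds}; the paper instead takes $A_0 = 6\max\{A_1,A_2\}$ with $A_2 = \max_i\{M_i\|\ti a_2\|_{L^\infty(I_1)}^{(n-i)/2}\}^{1/n}$, a more refined quantity that produces the sharper first line of \eqref{finalbounds}. More substantively, you verify \eqref{A.4} only at $k=0$ (Lemma \ref{lem:hyp}), at $k=i$ (Lemma \ref{taylor} applied on all of $\overline I_1$, which is enough because the factor $|\ti a_2|^{(i-k)/2}$ collapses to $1$ when $k=i$), and at $k>i$ (directly, since $\|\ti a_2\|_{L^\infty}\le A^2$ makes $|\ti a_2(t)|^{(i-k)/2}\ge A^{i-k}$), and then invoke Lemma \ref{shortcut} to bridge $0<k<i$ and obtain \eqref{A.3}. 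The paper does not go through Lemma \ref{shortcut} here: it applies Lemma \ref{taylor} directly on the small interval $I_{t_0}(A^{-1})$, with $A_2$ chosen exactly so that the Lipschitz contribution $M_i|I|^n$ in that Taylor estimate is dominated by the $L^\infty$ contribution $\|\ti a_i\|_{L^\infty(I)}\lesssim|\ti a_2(t_0)|^{i/2}$, and thereby gets all $k$ at once. Since the proof of Lemma \ref{shortcut} carries out internally precisely this Glaeser-plus-Taylor-on-a-small-interval argument, your version is a legitimate repackaging: slightly cleaner, bypassing the delicate definition of $A_2$, at the mild cost of not reproducing the sharpest constant in \eqref{finalbounds}.
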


\begin{proof} 
	Let $\de$ denote the distance between the endpoints of $I_0$ and those of $I_1$. 
	Set 
	\begin{align}\label{A1A2}
		  A_1:=  \max\big\{  \de^{-1}\|\ti a_2\|_{L^\infty( I_1)}^{\frac{1}{2}}, (\on{Lip}_{ I_1}(\ti a_2'))^{\frac 1 2}  \big\},  \quad 
		  A_2 :=  \max_i \big\{M_i \|\ti a_2\|^{\frac {n-i} 2}_{L^\infty( I_1)} \big\} ^{\frac 1 n} ,		
	\end{align}
	where $M_i =   \on{Lip}_{ I_1}(\ti a_i ^{(n-1)})$.  Then we may choose 
	\begin{align}\label{formulaforA}
		A \ge  A_0= 6 \max \{{A_1}, A_2\}.	
	\end{align}	
	For  \eqref{A.2}--\eqref{A.3} to be satisfied we need only $A\ge 6A_1$.  Indeed, clearly, for $t_0 \in I_0'$, 
	\begin{equation} \label{eq:incl}
		I_{t_0}(A_1^{-1}) \subseteq I_1.
	\end{equation}
	Then Lemma~\ref{Glaeser} implies that 
  	\begin{align*}
    	 |\ti a_2'(t_0)| \le  2A_1 \, |\ti a_2(t_0)|^{\frac{1}{2}}.
  	\end{align*} 
  	It follows that, 
  	for $t_0 \in I_0'$ and $t \in I_{t_0}((6A_1)^{-1})$,  we have \eqref{long} and hence  \eqref{A.3}. 
	If $t \in I_{t_0}(A^{-1})$ then Lemma~\ref{taylor}, Lemma~\ref{lem:hyp}, and \eqref{A.3} imply \eqref{A.4}.
  This ends the proof of Proposition \ref{lem:1}.  
\end{proof}

\subsection{Bounds for \texorpdfstring{$p=n$}{p=n}}\label{subsectionbounds1}

Let $\lambda (t)\in C^0 (I)$ be a root of $P_{\ti a}$ that is in Tschirnhausen form  and let  $I_0 \Subset I_1 \Subset I$.  By the 
inductive step \ref{ssec:indstep}, Proposition \ref{lem:1}, and Lemma \ref{lem:ext1} we have the following bounds 
\begin{align}\label{finalbounds}
  \on{Lip}_{I_{0}}(\la)  & \le  C(n) 
    \max\Big\{  \de^{-1}\|\ti a_2\|_{L^\infty( I_1)}^{\frac{1}{2}}, (\on{Lip}_{ I_1}(\ti a_2'))^{\frac 1 2},   
    \max_i \big\{M_i \|\ti a_2\|^{\frac {n-i} 2}_{L^\infty( I_1)} \big\} ^{\frac 1 n}   \Big\} \\
\notag	
		 &  \le C(n,I_0,I_1)\, \big(\max_i \|\ti a_i\|^{\frac 1 i}_{C^{n-1,1}(\overline I_1)}\big) 
		  \\
\notag	
		 &  \le C(n,I_0,I_1)\, \big(1+\max_i \|\ti a_i\|_{C^{n-1,1}(\overline I_1)}\big), 
\end{align}
where $\de$ is the distance between the endpoints of $I_0$ and those of $I_1$, and 
$M_i =   \on{Lip}_{ I_1}(\ti a_i ^{(n-1)})$.  
Then the bounds stated in Theorem \ref{main} follow from 
\begin{align*}
\max_i \|\ti a_i\|^{\frac 1 i}_{C^{n-1,1}(\overline I_1)}    
\le  C(n)\, \big(\max_i \| a_i\|^{\frac 1 i}_{C^{n-1,1}(\overline I_1)}\big)		 	  
\le C(n)\, \big(1+\max_i \| a_i\|_{C^{n-1,1}(\overline I_1)}\big) . 
\end{align*}
The first inequality follows from the (weighted) homogeneity of the formulas for $\tilde a_i$ in terms of $(a_1,\ldots,a_n)$.  
(The opposite inequality does not hold in general.  Adding a constant to all the roots of $P_{a}$ does not change the associated 
Tschirnhausen form $P_{\ti a}$ but changes the norm of the coefficients of $P_a$.)  

\subsection{The case \texorpdfstring{$2\le p<n$}{2<= p<n}}\label{subsectionbounds2}

To show that the roots are Lipschitz it suffices, using Lemma~\ref{split}, to split 
 $P_{\ti a}$ locally in factors of degree smaller than or equal to $p$ and apply the case $n=p$.   
 
  In order to have a uniform bound 
 we need to know that the multiplicities of roots are at most $p$ ``uniformly''.  For this 
 we order the roots of  $P_{\ti a}$ increasingly, $\la_1(t) \le \la_2(t) \le \cdots \le \la_n(t)$, and consider 
$$
\alpha (t) := \frac {|\la_n (t) - \la_1(t)|}  {\min_{i=1,...,n-p} |\la_{i+p}(t) -\la_{i}(t) |} ,  \quad \alpha_I := \sup _{t\in I} \alpha(t). 
$$
We note that the numerator ${|\la_n (t) - \la_1(t)|}  $ is of the same size as $|\ti a_2(t)|^{\frac 1 2}$,  
for $P_{\ti a}$ in Tschirnhausen form, since then $\lambda_1(t)$ and $\lambda_n(t)$ have opposite signs and
$$
n {|\la_n (t) - \la_1(t)|} \ge s_2(t)^{\frac 1 2}  = \sqrt 2 |\ti a_2(t)|^{\frac 1 2} \ge \frac  1 2  {|\la_n (t) - \la_1(t)|}.
$$

There are the following changes in the way we proceed.  
First in  the proof of Proposition~\ref{lem:1} we have to modify the formula for $A_2$ as follows 
 	\begin{align}\label{newA2}
		  A_2 :=  \max \Bigl \{ \max_{i\le p} \big\{M_i \|\ti a_2\|^{\frac {p-i} 2}_{L^\infty( I_1)} \big\} ^{\frac 1 p}, 
		   \max_{i> p} \big\{M_i \, m_2^{\frac {p-i} 2}   \big\} ^{\frac 1 p} \Bigr \}, 
	\end{align}
where $M_i =   \on{Lip}_{ I_1}(\ti a_i ^{(p-1)})$ and $m_2 = \min_{t\in \overline I_0} |\ti a_2(t)|$.

 In \eqref{A.4} of Assumption we may consider only  the derivatives of order $k\le p$.   
 Therefore the argument of the inductive step (proof of Lemma \ref{lem:induction}) changes as follows.  
 The first part of the proof of Lemma  \ref{lem:induction} does not change.  
 Then we need \eqref{B.4} for $i=k$ in order to apply Lemma \ref{shortcut}.  This is not available if $i>p$, which happens 
 if $\deg P_b >p$,  and then we have only 
 \[
    |{\tilde b_i}^{(p)}| \le C(n)\,  A^p\, |\ti a_2|^{\frac{i-p}{2}} \le C(n)\,  A_b ^p\, |\tilde b_2|^{\frac{i-p}{2}} ,
  \]
  where we may take $A_b = C(n)  |\ti a_2(t_1) / \tilde b_2(t_1)|^{\frac {n-p} {2p}} A $.  Then, by Lemma \ref{taylor}, 
  we conclude Lemma~\ref{lem:induction} with $A$ replaced by $A_b$.  This modification is no longer necessary 
  when $\deg P_b \le p$.  Thus during the induction process, say,  $P_{\ti a} \to P_{\ti b} \to \cdots \to P_{\ti d}\to P_{\ti e}$ 
    with $\deg P_e \le p$, for the intervals $ I_{t_0}(A^{-1} ) \supset I_ {t_1} (A_b^{-1}) \supset \cdots \supset I_ {t_s} (A_d^{-1}) $, 
  the constant $A$ is replaced by 
 \[   \tilde A = C(n) \alpha (t_s) ^{\frac {n-p} {p}} A \ge C(n) \Big( \Big|\frac{\ti a_2(t_s)}{\ti b_2(t_s)}\Big| 
    \cdot  \Big|\frac{\ti b_2(t_s)}{\ti c_2(t_s)}\Big|
    \cdots  \Big|\frac{\ti c_2(t_s)}{\ti d_2(t_s)}\Big|\Big)^{\frac {n-p} {2p}} A.  
  \]
  
Finally this gives the following bounds on the Lipschitz constant of each of the roots 
\begin{align} \label{finalbounds2}
  \begin{split}
  \MoveEqLeft   
  \on{Lip}_{I_{0}}(\la) 
  \\    
  \le&\,      C(n)\,    \alpha_{I_1}^{\frac {n-p} {p}} 
    \max\Big\{  \de^{-1}\|\ti a_2\|_{L^\infty( I_1)}^{\frac{1}{2}}, (\on{Lip}_{ I_1}(\ti a_2'))^{\frac 1 2},   
    \max_{i\le p} \big\{M_i \|\ti a_2\|^{\frac {p-i} 2}_{L^\infty( I_1)} \big\} ^{\frac 1 p},  
    \max_{i> p} \big\{M_i \, m_2^{\frac {p-i} 2}  \big\} ^{\frac 1 p}  \Big\}  
  \\
  \le&\,   C(n, I_0, I_1)\,   \alpha_{I_1}^{\frac {n-p} {p}} \, \bigl (1+ m_2^{\frac{p-n}{2p}} \bigr ) \,  \big(1+\max_i \|\ti a_i\|_{C^{p-1,1}(\overline I_1)}\big) . 
  \end{split}  
\end{align}

This completes the proof of Theorem~\ref{main}. \qed

\section{Proof of Theorem \ref{mainC1}} 

\subsection{\texorpdfstring{$\tC{p}{m}$}{pCm}-functions}

In the proof of Theorem \ref{mainC1} we shall need a result for functions  defined near $0 \in\R$ that become $C^m$ when 
multiplied with the monomial $t^p$. 

\begin{definition}
  Let $p,m \in \N$ with $p \le m$.
A continuous complex valued function $f$ defined near $0 \in \R$ is called a \emph{$\tC{p}{m}$-function} 
if $t \mapsto t^p f(t)$ belongs to $C^m$. 
\end{definition}

Let $I \subseteq \R$ be an open interval containing $0$. 
Then $f : I \to \C$ is $\tC{p}{m}$ if and only if 
it has the following properties, cf.\ \cite[4.1]{Spallek72}, \cite[Satz~3]{Reichard79}, or \cite[Thm~4]{Reichard80}:
\begin{itemize}
\item $f \in C^{m-p}(I)$,
\item $f|_{I\setminus \{0\}} \in C^m(I\setminus \{0\})$,
\item $\lim_{t \to 0} t^k f^{(m-p+k)}(t)$ exists as a finite number for all $0 \le k \le p$. 
\end{itemize}

\begin{proposition} \label{prop:pCm}
If $g=(g_1,\ldots,g_n)$ is $\tC{p}{m}$ and $F$ is $C^m$ near $g(0)\in \C^n$, then $F \o g$ is $\tC{p}{m}$.
\end{proposition}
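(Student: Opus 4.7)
The plan is to verify the three characterizing properties of a $\tC{p}{m}$-function listed just before the statement, for $h := F \o g$. The first two properties are immediate: near $0$, $g$ is $C^{m-p}$ and $F$ is $C^m$, so $h$ is $C^{m-p}$; and on $I \setminus \{0\}$, $g$ is $C^m$, so $h$ is $C^m$. The content lies in the third property: one must show that $\lim_{t \to 0} t^k h^{(m-p+k)}(t)$ exists as a finite number for every $0 \le k \le p$.

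For this I would use the Fa\`a di Bruno formula. For $t \ne 0$, the derivative $h^{(r)}(t)$ with $r = m-p+k$ is a finite linear combination of terms of the form
\[
  (\p^\beta F)(g(t))\, \prod_{j=1}^{s} g_{i_j}^{(\al_j)}(t),
  \qquad \al_j \ge 1,\quad \sum_{j=1}^s \al_j = r.
\]
The factor $(\p^\beta F)(g(t))$ is continuous at $0$ since $F \in C^m$ near $g(0)$. Each factor $g_{i_j}^{(\al_j)}(t)$ with $\al_j \le m-p$ is continuous at $0$ (and hence bounded there). For the remaining factors with $\al_j > m-p$, write $\ell_j := \al_j - (m-p) > 0$; by the definition of $\tC{p}{m}$, applied coordinatewise to $g$, the limit $\lim_{t \to 0} t^{\ell_j} g_{i_j}^{(\al_j)}(t)$ exists and is finite.

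The key combinatorial step is to show that the ``bad'' indices can always be handled by the available factor $t^k$. Let $J = \{j : \al_j > m-p\}$. If $J = \emptyset$, every $g$-factor is bounded near $0$, so the whole term is $O(t^k)$ and has limit $0$ (or a finite value when $k = 0$). If $J \ne \emptyset$, then
\[
  \sum_{j \in J} \ell_j \;=\; \sum_{j \in J} \al_j - |J|(m-p) \;\le\; r - (m-p) \;=\; k,
\]
since $\sum_{j \in J} \al_j \le r$ and $|J| \ge 1$. Hence we may split $t^k = t^{k - \sum_{j \in J} \ell_j} \prod_{j \in J} t^{\ell_j}$; each factor $t^{\ell_j} g_{i_j}^{(\al_j)}(t)$ tends to a finite limit, the remaining $g$-factors are continuous at $0$, the factor $(\p^\beta F)(g(t))$ is continuous at $0$, and $t^{k-\sum \ell_j}$ tends to $0$ or equals $1$. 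Therefore each term has a finite limit as $t \to 0$, and summing over the finitely many terms of Fa\`a di Bruno yields the required limit for $t^k h^{(m-p+k)}(t)$.

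The main obstacle is just the bookkeeping in the Fa\`a di Bruno expansion and verifying the inequality $\sum_{j \in J} \ell_j \le k$ carefully; once this counting is pinned down, the argument reduces to applying the coordinatewise hypothesis on $g$ together with continuity of the partial derivatives of $F$.
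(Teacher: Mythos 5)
Your proof is correct and takes essentially the same approach as the paper: both apply Fa\`a di Bruno's formula and distribute the factor $t^k$ over the $g$-derivatives of order exceeding $m-p$, using precisely the same counting inequality (your $\sum_{j\in J}\ell_j \le k$ is the paper's $|\beta|\le k$ with $\beta_i = \max\{\alpha_i - m + p, 0\}$). The only difference is notational, with the paper phrasing the formula via the multilinear differential $d^\ell F$ while you expand it in partial derivatives.
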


\begin{proof}
Cf.\ \cite[Thm~9]{Reichard80} or \cite[Prop~3.2]{RainerFin}. Clearly $g$ and $F \o g$ are $C^{m-p}$ near $0$ and $C^m$ off $0$. 
By Fa\`a di Bruno's formula \cite{FaadiBruno1855}, for $1 \le k \le p$ and $t\ne0$,
\begin{align*} 
    \frac{t^k (F \o g)^{(m-p+k)}(t)}{(m-p+k)!} &= \sum_{\ell\ge 1}  \sum_{\al \in A}
    \frac{t^{k-|\be|}}{\ell!}  d^\ell F(g(t)) 
    \Big( 
    \frac{t^{\be_1} g^{(\al_1)}(t)}{\al_1!},\dots,
    \frac{t^{\be_\ell} g^{(\al_\ell)}(t)}{\al_\ell!}\Big) \\
    A &:= \{\al\in \N_{>0}^\ell : \al_1+\dots+\al_\ell =m-p+k\} \\
    \be_i &:= \max\{\al_i - m + p,0\}, \quad |\be| = \be_1+\dots+\be_\ell \le k,
\end{align*}
whose limit as $t\to 0$ exists as a finite number by assumption.
\end{proof}

Let us prove Theorem \ref{mainC1}.
We suppose that $P_a$ is in Tschirnhausen form $P_a=P_{\ti a}$.  It suffices to consider the case $n=p$.  
We show that every $t_0\in I$ has a neighborhood in $I$ on which (1) and (2) (of Theorem \ref{mainC1}) hold.  
 If $\ti a_2(t_0) \ne 0$ then $P_{\ti a}$ splits on a 
neighborhood of $t_0$ and we may proceed by induction on $\deg P_a$.  
If $\ti a_2 (t_0)=0$ then $\ti a'_2 (t_0)=0$ and we distinguish two cases 
\begin{itemize}
\item
\emph{Case (i):} 
$\ti a_2(t_0) = \ti a'_2(t_0) = \ti a''_2(t_0)=0$.
\item 
\emph{Case (ii):} 
$\ti a_2(t_0) = \ti a'_2(t_0)=0$ and $ \ti a''_2(t_0)\ne 0$.
\end{itemize}
To simplify the notation we suppose $t_0=0$.  Fix a continuous root $\lambda(t)$ defined in a 
neighborhood of $0$. 

\subsection{Proof of (1)} \label{Proofof1}

In Case (i), $\lambda (t) = o(t)$ and hence $\lambda$ is differentiable at $0$ and 
$\lambda' (0) =0$.  
In Case (ii),  $\ti a_2(t) \sim t^2$  and hence $\ti a_i(t) = O(t^i)$.  Therefore, 
\[
	\underline a (t) := \big(t^ {-2} \ti a_2(t), t^{-3 } \ti a_3 (t),\ldots, t^{-n } \ti a_n (t) \big) : 
	I_1 \to  \R^{n-1}_{\underline a} 
\]
defined on a neighborhood $I_1$ of $0$ is continuous. 
By  Lemma~\ref{split}, $P_{\underline a}$ splits.  
The splitting 
$P_{\underline a} = P_{\underline b} P_{\underline c}$  induces a splitting 
$P_{\ti a} = P_b P_c$, 
where the $b_i$ are given by 
 \begin{equation} \label{eq:3t}
   b_i = t^{{i}} \ps_i \big(t^{-2} \ti a_2, \ldots, t^{-n} \ti a_n\big), 
    \quad i = 1,\ldots,\deg P_b;
  \end{equation}
  and similar formulas hold for $\tilde b_i$.   Then $b_i$ and $\tilde b_i$ are of class $C^i$ at $0$, 
  by Proposition \ref{prop:pCm}, 
  and of class 
  $C^n$ in the complement of $0$.   
  Moreover we may choose the splitting such that $\lambda (t) $ for $t\ge 0$ 
   is a root of $P_b$,  and all the roots of $P_{\underline b(0)}$ are equal.  The latter gives  
  $$
  \tilde b_2(0) =   \tilde b'_2(0) =   \tilde b''_2(0) = 0. 
  $$
   Thus,  $\lambda (t)$ can be expressed as in \eqref{sumforaroot} with  
  $b_1$ of class $C^1$ and $\mu_j$ differentiable at $0$ ($\mu'_j(0) =0$). 
  This finishes the proof of (1).  
  
\subsection{Proof of (2)} 

  This is the heart of the proof.  In Case (i) the continuity of the one-sided derivatives 
  at $0$ follows from \eqref{finalbounds} and the following lemma.  

\begin{lemma}\label{flatcaselemma}
Suppose Case (i) holds.  
Then for any $\varepsilon>0$ there is $\delta >0$ such that for  
$I_0 = (-\delta, \delta)$ and $I_1 = (-2\delta, 2\delta)$  and $A_0$ defined by \eqref{formulaforA} we have 
 $A_0 \le \varepsilon$.  
\end{lemma}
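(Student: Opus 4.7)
My plan is to exploit the hyperbolicity bound of Lemma \ref{lem:hyp} together with the strengthened regularity $\ti a_i\in C^n$ hypothesized in Theorem \ref{mainC1} (as opposed to the $C^{n-1,1}$ of Theorem \ref{main}) to force \emph{all} coefficients of $P_{\ti a}$ to vanish to high order at $t=0$ in Case (i), and then to read off the desired bound on $A_0$ directly from the definitions in \eqref{A1A2}.

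First I would note that, since $\ti a_2\in C^n$ with $\ti a_2(0)=\ti a_2'(0)=\ti a_2''(0)=0$, Peano's form of Taylor's formula gives $\ti a_2(t)=o(t^2)$ as $t\to 0$. Lemma \ref{lem:hyp} upgrades this to $|\ti a_i(t)|\le 2^{i/2}|\ti a_2(t)|^{i/2}=o(t^i)$ for $i=2,\ldots,n$. Since each $\ti a_i$ is itself at least $C^i$ and vanishes to order strictly greater than $i$ at $0$, Peano's formula applied to $\ti a_i$ forces $\ti a_i^{(j)}(0)=0$ for every $j=0,1,\ldots,i$. The key consequence is the identity $\ti a_n^{(n)}(0)=0$: this uses crucially that $\ti a_n^{(n)}$ is continuous (not merely defined almost everywhere), which is exactly what the $C^n$ hypothesis of Theorem \ref{mainC1} buys us.

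Next I would estimate the two constituents of $A_1=\max\{\de^{-1}\|\ti a_2\|_{L^\infty(I_1)}^{1/2},(\on{Lip}_{I_1}(\ti a_2'))^{1/2}\}$ for $I_1=(-2\de,2\de)$. Since $\ti a_2(0)=\ti a_2'(0)=0$, the integral form of Taylor yields $\|\ti a_2\|_{L^\infty(I_1)}\le 2\de^2\sup_{I_1}|\ti a_2''|$, so $\de^{-1}\|\ti a_2\|_{L^\infty(I_1)}^{1/2}\le\sqrt 2\,(\sup_{I_1}|\ti a_2''|)^{1/2}$; and trivially $\on{Lip}_{I_1}(\ti a_2')\le\sup_{I_1}|\ti a_2''|$. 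Both upper bounds tend to $0$ as $\de\to 0$ by continuity of $\ti a_2''$ together with $\ti a_2''(0)=0$, so $A_1\to 0$. For $A_2=(\max_i M_i\|\ti a_2\|_{L^\infty(I_1)}^{(n-i)/2})^{1/n}$ with $M_i=\on{Lip}_{I_1}(\ti a_i^{(n-1)})\le\sup_{I_1}|\ti a_i^{(n)}|$, I would treat $2\le i<n$ and $i=n$ separately. For $i<n$ the exponent $(n-i)/2$ is strictly positive, $\|\ti a_2\|_{L^\infty(I_1)}\to 0$, while $M_i$ stays bounded as $\de$ shrinks; so the product vanishes. In the critical case $i=n$ the factor $\|\ti a_2\|^{0}=1$ provides no help and one needs $M_n$ itself to be small, but $M_n\le\sup_{I_1}|\ti a_n^{(n)}|\to 0$ as $\de\to 0$ by continuity of $\ti a_n^{(n)}$ and the identity $\ti a_n^{(n)}(0)=0$ already established. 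Hence $A_2\to 0$, and therefore $A_0=6\max\{A_1,A_2\}\to 0$.

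The only genuine obstacle is the case $i=n$ in the bound for $A_2$, since it is precisely there that the $\|\ti a_2\|^{(n-i)/2}$ factor degenerates to $1$ and some real use has to be made of the hypothesis; the rest of the argument is a bookkeeping exercise using that all positive powers of $\|\ti a_2\|_{L^\infty(I_1)}$ are small. This is also the unique point in the proof of Theorem \ref{mainC1} Case (i) where the continuity of the top derivative $\ti a_n^{(n)}$, and not merely its almost-everywhere existence as a bounded function, is essential.
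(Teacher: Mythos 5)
Your proof is correct and fills in, in the natural way, the details that the paper's one-line justification ("This follows immediately from the formula \eqref{formulaforA}") leaves to the reader: the vanishing of $\ti a_2, \ti a_2', \ti a_2''$ at $0$ combined with Lemma \ref{lem:hyp} and the $C^n$-regularity forces $\ti a_n^{(n)}(0)=0$, after which every term in \eqref{A1A2} tends to $0$ as $\de\to 0$. You also rightly pinpoint that the only place the full $C^n$ hypothesis (rather than $C^{n-1,1}$) is indispensable is the $i=n$ term of $A_2$, where the weight $\|\ti a_2\|_{L^\infty(I_1)}^{(n-i)/2}$ degenerates to $1$ and one must instead use continuity of $\ti a_n^{(n)}$ at its zero.
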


\begin{proof}
This follows immediately from the formula \eqref{formulaforA}.  
\end{proof}

To show the continuity in Case (ii) we need a similar result for $P_{\tilde b}$.  
 
 \begin{lemma}
Suppose Case (ii) holds.  
Then, under the assumptions of Subsection \ref{Proofof1},   
for any $\varepsilon>0$ there is a neighborhood $I_\varepsilon$ of $0$ in $I$ such that 
for every  $t_0\in I_\varepsilon \setminus \{0\}$ the conditions \eqref{A.2}--\eqref{A.4} are satisfied 
for $P_{\tilde b}$ with $A\le \varepsilon$.  
\end{lemma}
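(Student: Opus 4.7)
The plan is to invoke Proposition \ref{lem:1} with $P_{\tilde b}$ in place of $P_{\tilde a}$, for a suitable pair of intervals $J_0 \Subset J_1 \subset I \setminus \{0\}$ containing $t_0$, and then to show that the constant $A_0$ produced by \eqref{formulaforA} tends to $0$ uniformly as $t_0 \to 0$. Concretely, for $t_0$ near $0$ I would take $J_0 := (t_0 - |t_0|/4, t_0 + |t_0|/4)$ and $J_1 := (t_0 - |t_0|/2, t_0 + |t_0|/2)$, so $\delta = |t_0|/4$ and $J_1$ shrinks into the origin as $t_0 \to 0$. On such $J_1$ the polynomial $P_{\tilde b}$ is of class $C^n$, hence Proposition \ref{lem:1} applies (in the degree $k := \deg P_b \le n-1$) and produces the Assumption for $\tilde b$ with the constant $A = A_0(\tilde b, J_0, J_1)$.

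The decisive structural fact, already isolated in the Proof of (1), is that the splitting is chosen so that $\tilde\psi_i(\underline a(0)) = 0$ for every $i \ge 2$ in $\tilde b_i(t) = t^i\, \tilde\psi_i(\underline a(t))$. Combined with Proposition \ref{prop:pCm}, this gives $\tilde b_i \in C^i$ at $0$ with $\tilde b_i^{(l)}(0) = 0$ for every $0 \le l \le i$. Two consequences will drive the whole estimate. First, for $l \le i$ the derivative $\tilde b_i^{(l)}$ is continuous at $0$ with value $0$, so $\|\tilde b_i^{(l)}\|_{L^\infty(J_1)} \to 0$ as $t_0 \to 0$. Second, for $l > i$ and $t \ne 0$, a Leibniz expansion of $t^i(\tilde\psi_i \circ \underline a)$ combined with a bound $|\underline a_j^{(m)}(t)| \le C|t|^{-m}$ will yield $|\tilde b_i^{(l)}(t)| \le C|t|^{i-l}$ on $J_1$; the bound on $\underline a_j^{(m)}$ comes from a Taylor expansion of $\tilde a_j$ at $0$, which is licit because $\tilde a_j \in C^{n-1,1}$ and $\tilde a_j(t) = O(t^j)$ by Lemma \ref{lem:hyp}.

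Given these two bounds, each ingredient of $A_0$ for $\tilde b$ is driven to $0$. The term $\delta^{-1}\|\tilde b_2\|_{L^\infty(J_1)}^{1/2}$ is dominated by $4|t_0|^{-1}\cdot(C|t_0|^2\omega(t_0))^{1/2} = C'\omega(t_0)^{1/2}$, where $\omega(t_0) := \sup_{s \in J_1}|\tilde\psi_2(\underline a(s))| \to 0$. The term $\on{Lip}_{J_1}(\tilde b_2')^{1/2} \le \|\tilde b_2''\|_{L^\infty(J_1)}^{1/2}$ vanishes by continuity of $\tilde b_2''$ at $0$ (since $\tilde b_2''(0) = 0$). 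For the mixed terms $M_i \|\tilde b_2\|_{L^\infty(J_1)}^{(k-i)/2}$ with $M_i := \on{Lip}_{J_1}(\tilde b_i^{(k-1)})$: when $i = k$, $M_k \le \|\tilde b_k^{(k)}\|_{L^\infty(J_1)} \to 0$ by continuity of $\tilde b_k^{(k)}$ at $0$; when $2 \le i < k$, combining $M_i \le \|\tilde b_i^{(k)}\|_{L^\infty(J_1)} \le C|t_0|^{i-k}$ with $\|\tilde b_2\|_{L^\infty(J_1)} \le C|t_0|^2\omega(t_0)$ gives $M_i\|\tilde b_2\|_{L^\infty(J_1)}^{(k-i)/2} \le C'\omega(t_0)^{(k-i)/2} \to 0$. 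Hence $A_0 \to 0$ as $t_0 \to 0$, which provides the desired neighborhood $I_\varepsilon$.

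The hard part, I expect, will be the rigorous verification of the derivative bound $|\underline a_j^{(m)}(t)| \le C|t|^{-m}$ near $0$. This demands a careful Leibniz bookkeeping on the quotient $\tilde a_j/t^j$, exploiting both the Taylor expansion of $\tilde a_j$ at $0$ (with $\tilde a_j^{(l)}(0) = 0$ for $l < j$) and the $C^{n-1,1}$-remainder; a companion Fa\`a di Bruno argument is then needed to transfer the bound to $\tilde\psi_i \circ \underline a$. All other steps are routine consequences of continuity at $0$, Taylor's theorem, and the structural identities already established in Proof of (1).
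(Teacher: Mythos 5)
Your proposal is essentially correct, but it takes a different technical route from the paper's.

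The main structural difference is twofold. First, you localize at a moving window $J_0(t_0)\Subset J_1(t_0)$ that shrinks linearly with $|t_0|$, invoke Proposition~\ref{lem:1} there, and show the resulting $A_0(t_0)\to 0$; the paper instead fixes $I_0=(-\delta/2,\delta/2)\Subset I_1=(-\delta,\delta)$ around the origin and verifies \eqref{A.2}--\eqref{A.4} directly for all $t_0\in I_0'$ with a single small $A$. Both versions are adequate for the downstream use (showing $\mu'^\pm(t)\to 0$ as $t\to 0$), but your version is formally weaker: it produces, for each $t_0$, a pair $(J_0,J_1)$ depending on $t_0$ rather than one fixed pair. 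Second, and more substantively, for the range $i<k$ the paper simply reuses \eqref{eq:b1} (already proved in the inductive step of Theorem~\ref{main}, since $P_{\tilde a}$ satisfies Assumption with some constant $\hat A$ by Proposition~\ref{lem:1}), combined with the observation $\varphi=|\tilde b_2/\tilde a_2|^{(k-i)/2}=o(1)$ --- a one-line argument. You instead propose to re-derive the bound $|\tilde b_i^{(l)}(t)|\le C|t|^{i-l}$ from scratch via Leibniz on $t^i(\tilde\psi_i\circ\underline a)$ and Fa\`a di Bruno, which requires the auxiliary estimate $|\underline a_j^{(m)}(t)|\le C|t|^{-m}$. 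This is doable (a naive term-by-term Leibniz on $t^{-j}\tilde a_j$ fails for the low-order derivatives of $\tilde a_j$; one must split $\tilde a_j$ into its Taylor polynomial of degree $n-1$ at $0$ --- which, divided by $t^j$, is a polynomial in $t$ of degree $\ge 0$ --- plus the Lipschitz remainder of order $n$, and treat the two pieces separately), but it is precisely the ``hard part'' you flag, and it is genuinely more laborious than the paper's appeal to \eqref{eq:b1}. Both proofs ultimately hinge on the same cancellation: the vanishing $\tilde\psi_i(\underline a(0))=0$ (all roots of $P_{\underline b(0)}$ equal), which you phrase as $\omega(t_0)\to 0$ and the paper phrases as $|\tilde b_2/\tilde a_2|=o(1)$. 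Your observation that $\tilde\psi_i(\underline a(0))=0$ forces $\tilde b_i^{(l)}(0)=0$ for all $l\le i$ is correct, though the paper obtains the same conclusion via Lemma~\ref{lem:hyp} applied to $P_{\tilde b}$ together with $\tilde b_2=o(t^2)$, which is slightly more direct.
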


\begin{proof}
Since $P_{\tilde b}$ is not necessarily of class 
$C^{\deg P_b}$ we cannot use directly Lemma \ref{flatcaselemma} and the induction on $\deg P_a$.  
But the proof is similar and we sketch it below.  

Let $I_1 =I_ \delta = (-\delta, \delta)$ and $I_0 = (- \frac \delta 2, \frac \delta 2 )$.  Since $\tilde b''_2(0) = 0 $ and $\tilde b_2(t)$ is 
of class $C^2$,  the constant $A_1$ of \eqref{A1A2}  for $\tilde b$ can be made arbitrarily small, provided $\delta$ is chosen sufficiently 
small.  This is what we  need to get \eqref{A.2}--\eqref{A.3} with arbitrarily small $A$. 
 
By Lemma \ref {lem:hyp},  $\tilde b_i^{(k)} (0)=0$ for $i=2, ..., \deg P_b$, $k=0, ... ,i$.  Fix $A>0$.  Since every $\tilde b_i$ is 
of class $C^i$, there is a neighborhood $I_\delta$ in which \eqref{A.4} holds for  $i=2, ..., n$, $k=i$,  and then, 
by Lemma \ref{shortcut}, in a smaller neighborhood,  also for $i=2, ..., n$, $k\le i$.  

Finally, given $A>0$ we show  \eqref{A.4} for $i<k\le n$ and $\delta$ sufficiently small.  
      Let $\hat A$ denote the constant $A$ for which \eqref{A.2}--\eqref{A.4} holds for $P_a$.  By 
     \eqref{eq:b1}, 
    \begin{equation*}
    |\ti b_i ^{(k)}(t) | \le C(n)  \hat A ^k\, |\ti a_2(t) |^{\frac{i-k}{2}} \le C(n) \hat A^k \, \varphi (t)  |\tilde b_2 (t) |^{\frac{i-k}{2}},
  \end{equation*}
  which gives the required result since,  for $k>i$,   $\varphi (t) =  | \tilde b_2 (t) / \ti  a_2(t) |^{\frac{k-i}{2}} = o(1)$. 
%
\end{proof} 

\subsection{Proof of (3)}

 We proceed by induction on $n$. The case $n=1$ is obvious. So assume $n>1$. 
  Set $F = \{t \in I : \ti  a_2(t) =\ti  a''_2(t) = 0\}$. Its  
  complement is a countable union of disjoint open intervals, $I \setminus F = \bigcup_k I_k$.
  At each $t_0 \in I \setminus F$ the polynomial $P_{\ti a}$ splits, and, by the induction hypothesis, 
  there exists a local differentiable system of the roots of $P_{\ti a}$ near $t_0$. We may infer that 
  there exists a differentiable system on each interval $I_k$. For, if the (say) right 
  endpoint $t_1$ of the domain $I_{\la}$ of  $\la = (\la_j)_{j=1}^n$ belongs to $I_k$, there exists a local 
  system $\mu = (\mu_j)_{j=1}^n$ with $t_1 \in I_{\mu}$. 
  We may choose $t_2 \in I_{\la} \cap I_{\mu}$ and extend $(\la_j)_j$ by $(\mu_{\si(j)})_j$ on the 
  right of $t_2$ beyond $t_1$, where $\si$ is a suitable permutation.  
  Extending by $0$ on $F$ yields a differentiable system $(\la_j)_j$ of the roots on $I$ (the derivatives vanish on $F$).  

\subsection{Proof of (4)}

  It follows immediately from (2).

\def\cprime{$'$}
\providecommand{\bysame}{\leavevmode\hbox to3em{\hrulefill}\thinspace}
\providecommand{\MR}{\relax\ifhmode\unskip\space\fi MR }
\providecommand{\MRhref}[2]{%
  \href{http://www.ams.org/mathscinet-getitem?mr=#1}{#2}
}
\providecommand{\href}[2]{#2}


\end{document}